\newcommand{\N}{\mathbb{N}}
\newtheorem{Theo}{Theorem}
\newtheorem{Lem}{Lemma}
\newtheorem{Cor}{Corollary}
\begin{document}
\title[Nowhere differentiable functions]{A continuum dimensional algebra of nowhere differentiable functions} 
\author[J.-C. Schlage-Puchta]{Jan-Christoph Schlage-Puchta}
\begin{abstract}
We construct an algebra of dimension $2^{\aleph_0}$ consisting only of functions which in no point possess a finite one-sided derivative. We further show that some well known nowhere differentiable functions generate algebras, which contain functions which are differentiable at some points, but where for all functions in the algebra the set of points of differentiability is quite small.
\end{abstract}
\maketitle
MSC-Index 26A27, 26A16, 26A30\\
Key words: nowhere differentiable functions, Takagi's function, function algebras

\section{Introduction and results}
In 1872, Weierstraß was the first to construct a continuous nowhere differentiable real function. This construction was first seen as a pathological example, it later turned out that nowhere differentiable functions are actually quite common. In fact, a standard application of the Baire category theorem is the proof that in a certain sense most functions are nowhere differentiable. Nowhere differentiable functions occur almost surely as the path of a Brownian motion. An explicit example of a nowhere differentiable function is Takagi's function \cite{Takagi}, that describes the asymptotics of the binary sum of digits. It can be defined as the unique solution $T:[0,1]\rightarrow\mathbb{R}$  of the functional equation 
\[
T(x) = \begin{cases} x+\frac{1}{2}T(2x), & 0\leq x\leq\frac{1}{2},\\
1-x+T(2x-1), & \frac{1}{2}\leq x\leq 1.
\end{cases}
\]
Fonf, Guraiy and Kadets\cite{FGK} constructed an infinite dimensional space $V\leq C^0([0,1])$, such that every non-zero element of $V$ is nowhere differentiable. Girgensohn \cite{Girgensohn} and Bobok \cite{Bobok} gave a construction of an infinite dimensional space such that every non-zero element has at no point a one-sided derivative.

In this note we construct algebras of nowhere differentiable functions of dimension $2^{\aleph_0}$. 

For some $\alpha\in(0,1]$ we say that a function $f:[0,1]\rightarrow\mathbb{R}$ is $\alpha$-Hölder at $x_0$ from the right, if 
\[
\limsup_{y\searrow x_0}\frac{f(y)-f(x_0)}{|y-x_0|^\alpha} <\infty.
\]
$\alpha$-Hölder from the left is defined analogously. We say that a function $f$ is completely non-Hölder, if there is no $x_0$ and no $\alpha>0$ such that $f$ is $\alpha$-Hölder at $x_0$ from the left or from the right.

Then we show the following.
\begin{Theo}
\label{thm:characterization}
Let $f:[0,1]\rightarrow\mathbb{R}$ be a continuous function. Then the following statements are equivalent.
\begin{enumerate}
\item $f$ is completely non-Hölder;
\item for every non-constant polynomial $P$ we have that $P(f)$ does not have a one-sided derivative equal to 0 at some point;
\item for every function $F$ which is non-constant locally analytic in some neighbourhood of $f([0,1])$ we have that $F(f)$ is completely non-Hölder.
\end{enumerate}
\end{Theo}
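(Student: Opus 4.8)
The plan is to prove the cyclic chain of implications $(1)\Rightarrow(3)\Rightarrow(2)\Rightarrow(1)$, arranged so that each single arrow carries exactly one idea. The device used throughout is an elementary \emph{factorization at a point}: if $F$ is non-constant and (real-)analytic in a neighbourhood of $c:=f(x_0)$, then $F(t)-F(c)$ is a nonzero analytic germ vanishing at $c$ to some finite order $m\ge 1$, whence $F(t)-F(c)=(t-c)^m S(t)$ with $S$ analytic near $c$ and $S(c)\ne 0$. Since $f$ is continuous and $S(c)\ne 0$, the factor $S(f(y))$ stays bounded away from $0$ for $y$ near $x_0$. Non-constant polynomials and the identity map $F=\mathrm{id}$ (the latter giving $m=1$) are special cases, so one computation simultaneously relates $F(f)$, $P(f)$ and $f$ to one another.

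For $(1)\Rightarrow(3)$ I would argue one point at a time, by contraposition. Fixing a non-constant locally analytic $F$, suppose $F(f)$ is $\beta$-Hölder at some $x_0$ from, say, the right. Writing $F(f(y))-F(c)=(f(y)-c)^m S(f(y))$ and using that $|S(f(y))|$ is bounded below, the Hölder bound becomes $|f(y)-c|^m=O(|y-x_0|^\beta)$, i.e.\ $f$ is $(\beta/m)$-Hölder at $x_0$ from the right, contradicting $(1)$. The only bookkeeping is the passage $\beta\mapsto\beta/m$ and the (harmless) sign of $S(c)$. The implication $(3)\Rightarrow(2)$ is then almost free: a non-constant polynomial is a non-constant analytic function, so $(3)$ makes $P(f)$ completely non-Hölder; and if $P(f)$ had a one-sided derivative equal to $0$ at a point, its increment would be $o(|y-x_0|)$ there, making $P(f)$ $1$-Hölder and contradicting complete non-Hölderness. (Any finite one-sided derivative would already give this, so $(2)$ asks for strictly less than $(3)$ supplies.)

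The remaining implication $(2)\Rightarrow(1)$ is where the actual construction lives, and I expect it to be the main obstacle. Arguing contrapositively, assume $f$ is $\alpha$-Hölder at some $x_0$ from, say, the right, and put $c=f(x_0)$. Choosing an integer $n$ with $n\alpha>1$ and taking the non-constant polynomial $P(t)=(t-c)^n$, the increment satisfies $|P(f(y))-P(c)|=|f(y)-c|^n\le C^n|y-x_0|^{n\alpha}$, so the one-sided difference quotient of $P(f)$ at $x_0$ tends to $0$; thus $P(f)$ has a right derivative equal to $0$, contradicting $(2)$. Two points require care. First, the power trick controls $f(y)-c$ irrespective of its sign only if the Hölder condition is read in its two-sided, absolute-value form; under the purely one-sided increment one instead selects the parity of $n$ to match the guaranteed direction and lands in the corresponding signed Hölder class, after which the same estimate applies. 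Second, one must verify that the limit of the quotient is genuinely $0$ rather than merely a finite $\limsup$, and this is precisely what the strict inequality $n\alpha>1$ secures.
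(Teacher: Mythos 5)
Your main argument is the paper's own proof: the same cycle $(1)\Rightarrow(3)\Rightarrow(2)\Rightarrow(1)$, with $(1)\Rightarrow(3)$ coming from the finite order of vanishing of $F-F(c)$ at $c=f(x_0)$ (the paper phrases this as a lower bound $|F(y)-F(c)|>c|y-c|^k$ and argues directly rather than contrapositively), and $(2)\Rightarrow(1)$ coming from the power $P(t)=(t-f(x_0))^n$ with $n\alpha>1$ (the paper takes $n=\lfloor 1/\alpha\rfloor+1$). Read with the absolute value in the pointwise H\"older condition --- which is the reading the paper itself uses when it negates (1) in its proof --- your write-up is complete and correct.

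The one genuine error is your first ``point requiring care'' in $(2)\Rightarrow(1)$: the claim that, under the literal signed definition (no absolute value, as the paper's definition is actually printed), one can choose the parity of $n$ and conclude as before. This fails. The signed hypothesis only gives an upper bound $f(y)-f(x_0)\le C(y-x_0)^\alpha$ for $y>x_0$ near $x_0$; taking $n$ odd then yields $(f(y)-f(x_0))^n\le C^n(y-x_0)^{n\alpha}$, hence $\limsup_{y\searrow x_0}\bigl(P(f(y))-P(f(x_0))\bigr)/(y-x_0)\le 0$, but nothing prevents the liminf from being $-\infty$: if, say, $f(y)-f(x_0)=-1/\log\bigl(e/(y-x_0)\bigr)$, the difference quotient of $(f-f(x_0))^n$ tends to $-\infty$ for every $n$. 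So you do land in a ``signed H\"older class,'' but you do not get a one-sided derivative equal to $0$, which is what contradicting (2) requires. Moreover the defect cannot be repaired, because under the signed reading the implication $(2)\Rightarrow(1)$ is false: take a continuous, completely non-H\"older (in the absolute sense) function $g$ with values in $[1,2]$ and set $f(x)=-g(x)/\log(e/x)$ for $x>0$, $f(0)=0$. One checks that $f$ is still completely non-H\"older in the absolute sense (at $x_0=0$ because $1/\log(e/x)$ decays slower than any power, at $x_0>0$ because there the factor $1/\log(e/x)$ is $C^1$ and bounded away from $0$), so $f$ satisfies (2) by the sound implications $(1)\Rightarrow(3)\Rightarrow(2)$; yet $f(y)-f(0)<0\le y^\alpha$ shows $f$ is signed $\alpha$-H\"older at $0$ from the right for every $\alpha$. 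The correct resolution, adopted tacitly by the paper, is to read the definition with absolute values throughout; your main estimate already does exactly that, so you should simply delete the parity remark rather than rely on it.
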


\begin{Cor}\label{Cor:dimension}
Let $f:[0,1]\rightarrow (0, \infty)$ be a continuous completely non-Hölder function. Then $\{f^\lambda:\lambda>0\}$ generates a $2^{\aleph_0}$-dimensional algebra $\mathcal{A}$, such that every non-zero element in $\mathcal{A}$ has nowhere a finite one-sided derivative.
\end{Cor}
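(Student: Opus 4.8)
The plan is to reduce everything to Theorem \ref{thm:characterization}, since the substantive work is already contained there. First I would make the algebra explicit. Because $f^{\alpha}f^{\beta}=f^{\alpha+\beta}$ and $\lambda>0$ throughout, every product of generators is again a single power $f^{\mu}$ with $\mu>0$; hence the algebra $\mathcal{A}$ generated by $\{f^{\lambda}:\lambda>0\}$ is precisely the $\mathbb{R}$-linear span of these powers, and a typical non-zero element has the form
\[
g=\sum_{i=1}^{n}c_i f^{\lambda_i},\qquad \lambda_1<\dots<\lambda_n,\ \lambda_i>0,\ (c_1,\dots,c_n)\neq 0.
\]
It is important that $\lambda>0$ is strict, so that no constant function enters $\mathcal{A}$; a non-zero constant would be differentiable everywhere and would spoil the conclusion.

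For the dimension I would note that $f$, being completely non-Hölder, is in particular non-constant, so by continuity its image is a non-degenerate interval $[m,M]\subset(0,\infty)$. If $\sum_i c_i f^{\lambda_i}\equiv 0$ on $[0,1]$, then $\sum_i c_i t^{\lambda_i}=0$ for every $t\in[m,M]$; substituting $t=e^{s}$ turns this into a vanishing linear combination of the distinct exponentials $e^{\lambda_i s}$, which forces all $c_i=0$. Thus $\{f^{\lambda}:\lambda>0\}$ is linearly independent, so $\dim\mathcal{A}\geq 2^{\aleph_0}$, while $\dim\mathcal{A}\leq\dim C^0([0,1])=2^{\aleph_0}$, giving equality.

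The central step is to write $g=F\circ f$ with $F(t)=\sum_{i=1}^{n}c_i t^{\lambda_i}$. On $(0,\infty)$ each map $t\mapsto t^{\lambda_i}$ is real-analytic, so $F$ is real-analytic on an open neighbourhood of the compact set $f([0,1])\subset(0,\infty)$; and $F$ is non-constant there, since otherwise $\{t^{\lambda_i}\}\cup\{1\}$ would satisfy a non-trivial linear relation, again impossible. Implication $(1)\Rightarrow(3)$ of Theorem \ref{thm:characterization}, applied to this $F$, then yields that $g=F(f)$ is completely non-Hölder.

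Finally I would convert ``completely non-Hölder'' into ``no finite one-sided derivative''. If $g$ had, say, a finite right derivative $L$ at $x_0$, then for $y\searrow x_0$ we have $|y-x_0|=y-x_0$ and $\frac{g(y)-g(x_0)}{|y-x_0|}=\frac{g(y)-g(x_0)}{y-x_0}\to L$, so $\limsup_{y\searrow x_0}\frac{g(y)-g(x_0)}{|y-x_0|^{1}}=L<\infty$, i.e.\ $g$ is $1$-Hölder at $x_0$ from the right, contradicting complete non-Hölderness; the left-hand case is identical. Hence no non-zero element of $\mathcal{A}$ has a finite one-sided derivative at any point. Once the theorem is in hand the argument is mostly bookkeeping; the only steps requiring genuine care are the linear independence of the real powers $t^{\lambda_i}$ (needed both for the dimension count and for the non-constancy of $F$) and the verification that a constant function never slips into $\mathcal{A}$.
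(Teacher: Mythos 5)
Your proof is correct and takes essentially the approach the paper intends: the corollary is stated as a direct consequence of Theorem~\ref{thm:characterization}, obtained by writing each non-zero element $\sum_i c_i f^{\lambda_i}$ as $F\circ f$ with $F(t)=\sum_i c_i t^{\lambda_i}$ non-constant and real-analytic on a neighbourhood of $f([0,1])\subset(0,\infty)$, then applying (1)$\Rightarrow$(3). Your added details --- linear independence of distinct real powers via the substitution $t=e^{s}$, exclusion of constants, and the observation that a finite one-sided derivative would make the function one-sided $1$-H\"older --- are exactly the bookkeeping the paper leaves implicit.
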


Note that Bayart and Quarta \cite{BQ} already constructed an algebra of nowhere differentiable functions with countably infinite transcendence degree, clearly the algebra given in Corollary~\ref{Cor:dimension} has uncountable transcendence degree.

Next we show that there are actually functions to which we can apply Theorem~\ref{thm:characterization}. We do so in two different ways. First, we show that completely non-Hölder functions are quite common.

\begin{Theo}
\label{thm:meagre}
The set of functions which are not completely non-Hölder is meagre.
\end{Theo}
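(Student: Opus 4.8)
The plan is to realise the set of functions that fail to be completely non-Hölder as a countable union of closed, nowhere dense subsets of $C^0([0,1])$, and then invoke the Baire category theorem. Throughout I read the Hölder condition in the form that $f$ is $\alpha$-Hölder at $x_0$ from the right iff there are $M,\delta>0$ with $|f(y)-f(x_0)|\le M|y-x_0|^\alpha$ for all $y\in(x_0,x_0+\delta)$, and analogously from the left. For $n,k\in\N$ I set
\[
A_{n,k}=\{f\in C^0([0,1]):\exists\,x_0\in[0,1),\ |f(y)-f(x_0)|\le n\,(y-x_0)^{1/k}\ \forall\,y\in(x_0,x_0+1/n]\cap[0,1]\},
\]
and let $A_{n,k}'$ be the left-sided analogue. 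Since $t^\alpha\le t^\beta$ for $0<t\le 1$ and $\beta\le\alpha$, being $\alpha$-Hölder implies being $\beta$-Hölder for every $\beta\le\alpha$; hence if $f$ is $\alpha$-Hölder at some $x_0$ from some side, then choosing $k$ with $1/k\le\min(\alpha,1)$ and an integer $n\ge\max(M,1/\delta)$ places $f$ into $A_{n,k}$ or $A_{n,k}'$. Thus the set of functions that are not completely non-Hölder is contained in $\bigcup_{n,k}(A_{n,k}\cup A_{n,k}')$, and it suffices to prove that each $A_{n,k}$, and symmetrically each $A_{n,k}'$, is closed with empty interior.

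First I would check that $A_{n,k}$ is closed. If $f_j\to f$ uniformly with witnesses $x_j$, compactness of $[0,1]$ lets me pass to a subsequence with $x_j\to x_0$. For any fixed $y\in(x_0,x_0+1/n)$ one has $x_j<y\le x_j+1/n$ for all large $j$, so $|f_j(y)-f_j(x_j)|\le n(y-x_j)^{1/k}$; letting $j\to\infty$ and using uniform convergence together with the continuity of $f$ and of $t\mapsto t^{1/k}$ yields $|f(y)-f(x_0)|\le n(y-x_0)^{1/k}$. The endpoint $y=x_0+1/n$ follows by continuity, so $f\in A_{n,k}$ with witness $x_0$.

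The substantial step is that $A_{n,k}$ has empty interior. Given $f$ and $\varepsilon>0$, I approximate $f$ uniformly to within $\varepsilon/2$ by a piecewise linear function $p$ with some finite Lipschitz constant $L$, and set $g=p+\sigma$, where $\sigma$ is a triangular sawtooth of amplitude $\varepsilon/4$, so $\|\sigma\|_\infty=\varepsilon/4$, and period $2w$ with $w$ to be fixed. The sawtooth oscillates in both directions, so on any half-open interval $(x_0,x_0+2w]$ of length one full period $\sigma$ attains both extreme values $0$ and $\varepsilon/4$; since $\sigma(x_0)\in[0,\varepsilon/4]$ there is $y\in(x_0,x_0+2w]$ with $|\sigma(y)-\sigma(x_0)|\ge\varepsilon/8$. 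For this $y$,
\[
|g(y)-g(x_0)|\ge|\sigma(y)-\sigma(x_0)|-|p(y)-p(x_0)|\ge\frac{\varepsilon}{8}-2Lw.
\]
Now I choose $w$ small enough that simultaneously $2Lw<\varepsilon/16$, that $(2w)^{1/k}<\varepsilon/(16n)$, and that $2w\le 1/n$. Then $0<y-x_0\le 2w\le 1/n$ and $|g(y)-g(x_0)|>\varepsilon/16>n(2w)^{1/k}\ge n(y-x_0)^{1/k}$, so the defining inequality of $A_{n,k}$ fails at this $y$ for the chosen $x_0$. As $x_0$ was arbitrary, no point can serve as a witness, i.e.\ $g\notin A_{n,k}$, while $\|f-g\|_\infty\le\|f-p\|_\infty+\|\sigma\|_\infty<\varepsilon$. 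The same $g$ defeats $A_{n,k}'$ after symmetrising the choice of $y$, so $A_{n,k}\cup A_{n,k}'$ is nowhere dense.

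The main obstacle is precisely this last step, and two features make it delicate. Because the exponent $1/k<1$, the barrier $n(y-x_0)^{1/k}$ dominates any linear term near $x_0$, so no fixed finite slope suffices: the violating oscillation must be placed at a sufficiently fine scale $w$, which is why the sawtooth period is allowed to shrink with the target exponent. Secondly, one must overcome the modulus of continuity of the centre $f$ itself; reducing first to a piecewise linear $p$ keeps the competing term merely Lipschitz, so that a single small sawtooth beats it at scale $w$ uniformly in $x_0$. The remaining bookkeeping — extracting $k,n$ from $\alpha,M,\delta$ and justifying the limit passage in the closedness argument — is routine.
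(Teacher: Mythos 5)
Your overall strategy is the same as the paper's: write the exceptional set as a countable union of sets indexed by a Hölder constant, exponent and window size, and defeat each set by adding a small, high-frequency oscillation. But as you have defined them, the sets $A_{n,k}$ are broken at the right endpoint of $[0,1]$, and this is not a removable edge case — it invalidates both of your key steps. Because you admit witnesses $x_0$ arbitrarily close to $1$ and truncate the test window to $(x_0,x_0+1/n]\cap[0,1]=(x_0,1]$, \emph{every} Lipschitz function $h$, say with constant $L'$, belongs to $A_{n,k}$ whenever $k\ge 2$: choose $\eta>0$ with $L'\eta^{1-1/k}\le n$ and take the witness $x_0=1-\eta$; then for all $y\in(x_0,1]$,
\[
|h(y)-h(x_0)|\le L'(y-x_0)\le L'\eta^{1-1/k}(y-x_0)^{1/k}\le n(y-x_0)^{1/k}.
\]
Consequently: (i) your perturbed function $g=p+\sigma$ is piecewise linear plus a sawtooth of finite slope, hence Lipschitz, hence $g\in A_{n,k}$ for every $k\ge 2$ — your sawtooth argument only excludes witnesses with $x_0+2w\le 1$, where a full period fits inside the domain, and the witnesses within $2w$ of the endpoint, which you pass over silently, actually succeed; (ii) since Lipschitz functions are dense in $C^0([0,1])$, each $A_{n,k}$ with $k\ge 2$ is \emph{dense}, so it cannot be nowhere dense no matter how the proof is arranged; and (iii) $A_{n,k}$ is also not closed (freeze a nowhere-Hölder function at the value $f(1-1/j)$ on $[1-1/j,1]$: each such $f_j$ lies in $A_{n,k}$ with witness $1-1/(2j)$, but the uniform limit $f$ does not, the limiting witness having escaped to $1$). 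The left-sided sets $A_{n,k}'$ have the mirror-image defect at $0$.

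The repair is small and standard, and it is what the classical Banach category argument does: require $x_0\in[0,1-1/n]$ in the definition of $A_{n,k}$ (and $x_0\in[1/n,1]$ in $A_{n,k}'$), so the test window $(x_0,x_0+1/n]$ always has full length and lies in $[0,1]$. The covering claim survives (for a Hölder point $x_0<1$ just take $n$ also larger than $1/(1-x_0)$); your compactness proof of closedness then goes through, the witness set now being compact with a window of fixed length; and your sawtooth construction now defeats every admissible witness, since $2w\le 1/n$ guarantees a full period inside the window. With this correction your proof is sound and matches the paper's argument in structure: the paper uses a one-parameter family $A_n$ (exponent $1/n$, window $1/n$, constant absorbed, both sides combined via a sign $\epsilon\in\{\pm1\}$) and the perturbation $f+\epsilon\sin(100(\epsilon/3)^{-n}x)$, and it shows directly that a whole ball of radius $\epsilon/10$ around the perturbed function misses $A_n$, which makes closedness unnecessary. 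Your additional step of first replacing $f$ by a piecewise-linear approximation $p$ is a genuine improvement in rigor over the paper's very brief sketch, since it is exactly what prevents the oscillation of $f$ itself from cancelling the added sawtooth.
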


We then give explicit examples of such functions and characterize them in terms of their Faber-Schauder expansion. As an example we have the following.

\begin{Cor}
The function $f(x)=\sum_{n\geq 0} \frac{1}{(n+1)^2} \sum_{i=0}^{2^n-1} \sigma_{n,i}(x)$ is completely non-H\"older.
\end{Cor}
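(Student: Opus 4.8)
My plan is to strip away the Faber--Schauder formalism and recognise $f$ as a generalised Takagi function with slowly decaying coefficients. The $\sigma_{n,i}$ are the hat functions of height $1$ on the dyadic intervals $[i2^{-n},(i+1)2^{-n}]$, and summing a whole level collapses to a sawtooth, $\sum_{i=0}^{2^n-1}\sigma_{n,i}(x)=2\varphi(2^nx)$ with $\varphi(t)=\operatorname{dist}(t,\mathbb{Z})$. Hence
\[
f(x)=2\sum_{n\ge 0}\frac{\varphi(2^nx)}{(n+1)^2},
\]
whose coefficients $a_n=2(n+1)^{-2}$ decay slower than any geometric sequence, $\limsup_n a_n^{1/n}=1$. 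This sub-geometric decay is the decisive feature: for geometric coefficients (Takagi's own $a_n=2^{-n}$) the modulus of continuity is $h\log(1/h)$, which is $\alpha$-H\"older for every $\alpha<1$, whereas here the modulus is of order $1/\log(1/h)$, too large to be dominated by any $h^\alpha$. The task is to promote this global heuristic to a pointwise, one-sided bound; note that Theorem~\ref{thm:characterization} does not help here, since establishing the base case is exactly what is at issue.

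Fix $x_0$, a side, and $\alpha\in(0,1]$; I must force the relevant one-sided $\limsup$ to be $+\infty$. The mechanism is to test with $y_N=x_0+2^{-N}$ (and $y_N=x_0-2^{-N}$ on the left). Because $2^ny_N-2^nx_0=2^{\,n-N}\in\mathbb{Z}$ for every $n\ge N$, the \emph{entire fine tail cancels identically}, leaving the finite sum
\[
f(y_N)-f(x_0)=2\sum_{n<N}\frac{\varphi(2^nx_0+2^{\,n-N})-\varphi(2^nx_0)}{(n+1)^2},
\]
of modulus at most $2^{1-N}\sum_{n<N}2^n(n+1)^{-2}\asymp N^{-2}$. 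Any positive increment of polynomial size suffices: if $f(y_N)-f(x_0)\ge N^{-C}$ for infinitely many $N$, then as $|y_N-x_0|=2^{-N}$,
\[
\frac{f(y_N)-f(x_0)}{|y_N-x_0|^{\alpha}}\ \ge\ \frac{N^{-C}}{2^{-\alpha N}}=\frac{2^{\alpha N}}{N^{C}}\longrightarrow\infty
\]
for every $\alpha>0$ simultaneously. Everything thus reduces to controlling the sign of the displayed finite sum.

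The main obstacle is precisely this sign control, and it is genuine. Since the weights $(n+1)^{-2}$ are not geometric, the top term $n=N-1$ does not dominate, so the sign is a delicate function of the whole binary expansion $x_0=0.b_1b_2\dots$, through the coarse slope $s_N=2\sum_{n<N}(-1)^{b_{n+1}}2^n(n+1)^{-2}$ together with the bounded ``peak-crossing'' corrections from $n=N-1,N-2$. For a \emph{generic} $x_0$ the signs of $s_N$ behave pseudo-randomly, so the increment is positive (of order $N^{-2}$) along infinitely many scales and one is done. The hard core consists of the extremal, ``trapped'' points such as $x_0=\tfrac23=0.\overline{10}$, for which $\{2^Nx_0\}\in\{\tfrac13,\tfrac23\}$ for all $N$: here $f(x_0)$ already carries an essentially maximal fine part $2\sum_m\varphi(2^mt)/(N+m+1)^2\approx\tfrac{2}{3N}$, and a direct computation shows that the naive points $x_0+2^{-N}$ in fact lie \emph{below} $f(x_0)$ for every parity of $N$, so that they cannot be used.

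I expect this extremal analysis to be the crux of the whole proof. At such a point the coarse trend on the bad side is monotone, dropping by $\Theta(N^{-2})$ across $[x_0,x_0+2^{-N}]$, while the best recoverable fine gain $M_N-f_{\ge N}(x_0)$ is only $\Theta(N^{-2})$ as well; one must therefore show that these two comparable quantities leave a \emph{positive} residue of order $N^{-2}$ along a suitable infinite set of scales, by pairing the oscillation of $s_N$ (for $x_0=\tfrac23$ an alternating sum of ratio tending to $2$) against the exact peak-crossing corrections. Carrying this out uniformly in $x_0$ is exactly the content of the Faber--Schauder characterisation of completely non-H\"older functions developed above, and the present statement is its instance obtained by specialising to the constant level coefficients $c_{n,i}=(n+1)^{-2}$, for which $2^{\alpha n}(n+1)^{-2}\to\infty$ for every $\alpha>0$.
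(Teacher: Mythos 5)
Your setup is correct as far as it goes: the identity $\sum_{i}\sigma_{n,i}(x)=2\varphi(2^nx)$, the exact cancellation of the tail $n\geq N$ at the test points $y_N=x_0\pm 2^{-N}$, and the observation that an increment of polynomial size $N^{-C}$ at infinitely many scales defeats $|y_N-x_0|^{\alpha}=2^{-\alpha N}$ for every $\alpha>0$ simultaneously. But the proof then stops exactly at its crux, and you say so yourself: you never prove the required lower bound on the increment $2\sum_{n<N}\pm 2^{n-N}(n+1)^{-2}+(\text{corrections})$, whose sign and size depend on the entire binary expansion of $x_0$ through the peak-crossing terms. ``For generic $x_0$ the signs behave pseudo-randomly'' is not an argument, and the statement must hold for \emph{every} $x_0$ and every side; your own computation at $x_0=\tfrac23$ shows the one-point scheme can fail, and nothing you prove rules out near-cancellation at every scale for some adversarial $x_0$. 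The closing claim that carrying out this extremal analysis ``is exactly the content'' of Theorem~\ref{thm:Holder criterion} is also not accurate: that theorem does not establish any lower bound at your single test points $x_0\pm2^{-N}$, and its proof does not repair your reduction --- it replaces it by a structurally different one.

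The missing idea is a \emph{two-point} comparison at a single scale. In the paper's proof of Theorem~\ref{thm:Holder criterion} one takes two consecutive dyadic points $x_1,x_2$ at scale $2^{-(n+1)}$ on the chosen side of $x_0$; then the fine part (levels $>n$) vanishes at $x_1,x_2$, the coarse part (levels $<n$) is linear on the relevant interval and therefore cancels identically in
\[
\frac{f(x_2)-f(x_0)}{x_2-x_0}-\frac{f(x_1)-f(x_0)}{x_1-x_0},
\]
while the level-$n$ hat, whose slope flips sign across its peak, forces this difference to have absolute value at least $2^{n+1}|\gamma_{n}|$. Hence at least \emph{one} of the two points exhibits a large increment, with no sign analysis, no control over which point works, and no dependence on the binary expansion of $x_0$. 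This is precisely what makes the hypothesis $\limsup 2^{\alpha n}\delta_n=\infty$ sufficient pointwise and one-sidedly. Once that theorem is available, the corollary is the one-line specialisation you state in your final sentence: $\delta_n=(n+1)^{-2}$, so $2^{\alpha n}\delta_n\to\infty$ for every $\alpha>0$. So either you quote the theorem --- which is the paper's proof, and then the preceding analysis is superfluous --- or you intend a self-contained argument, in which case the second-difference mechanism above is what must replace your single-point increment bound.
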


Here $\sigma_{n,i}$ is the Faber-Schauder basis given by
\[
\sigma_{0,0}(x)=\begin{cases}
0, & x\leq 0\mbox{ or }x\geq 1\\
2x, & 0\leq x\leq\frac{1}{2}\\
2-2x, & \frac{1}{2}\leq x\leq 1
\end{cases},
\]
and, for $n\in\N$ and $0\leq i\leq 2^n-1$, $\sigma_{n,i}=\sigma\left(2^nx-i\right)$. 

We finally give examples of functions that generate algebras of functions that are differentiable at some points, but where such points are quite rare.

\begin{figure}
\includegraphics[width=\textwidth, height = 5cm]{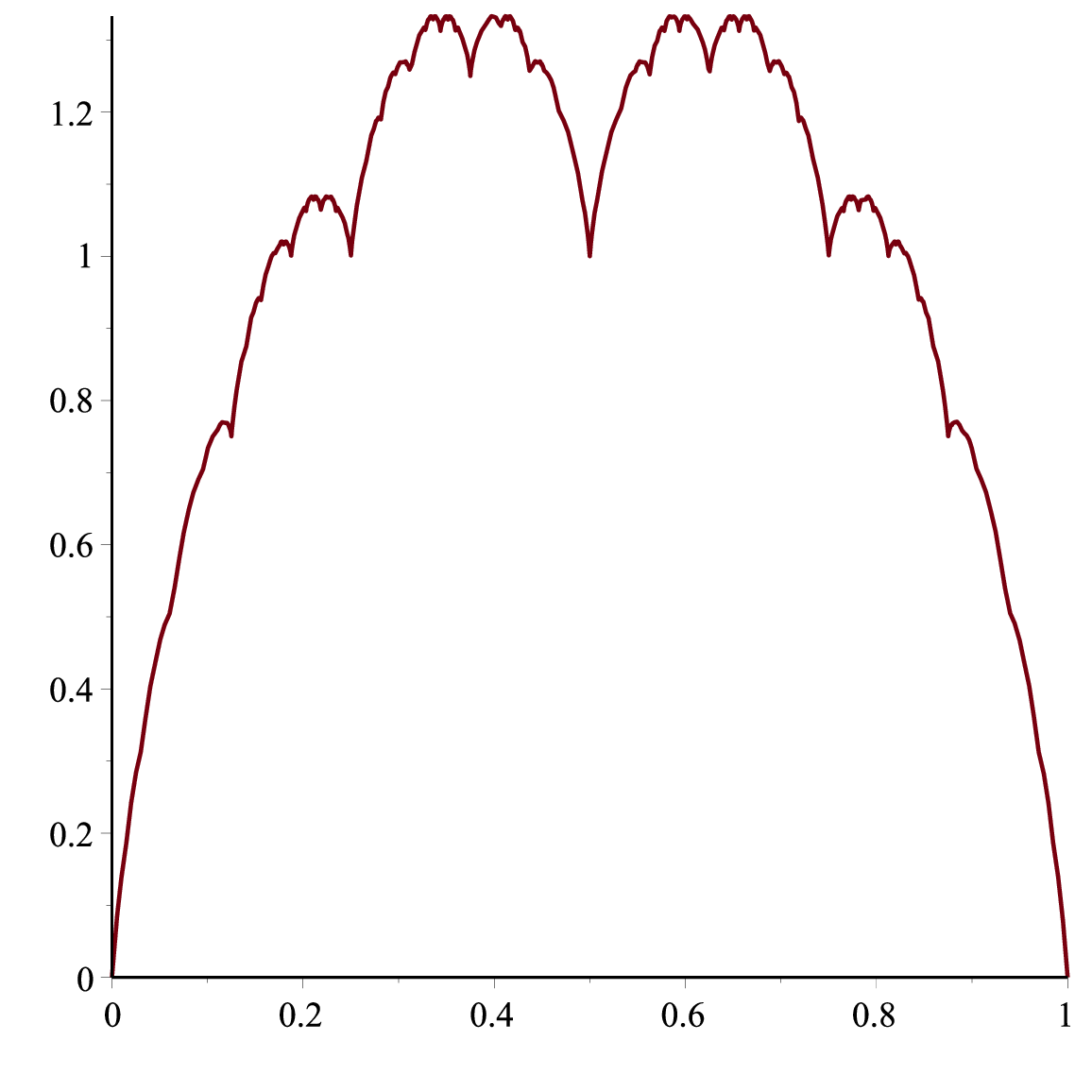}
\caption{Takagi's function $T(x)=\sum_{n\geq 0}\sum_{i=0}^{2^n-1} 2^{-n}\sigma_{n,i}(x)$ is nowhere differentiable.}
\end{figure}

\section{Function algebras}

We first prove Theorem~\ref{thm:characterization}.

Obviously, (2) is a weaker condition than (3). Now suppose that $f$ is a non-constant function, such that there exists some $x_0$ and some $\alpha>0$, such that 
\[
\limsup_{y\searrow x_0}\frac{|f(y)-f(x_0)|}{|y-x_0|^\alpha} <\infty.
\]
Put $n=\lfloor\frac{1}{\alpha}\rfloor+1$. Now consider the polynomial $P(t)=(t-f(x_0))^n$. As $\alpha$ is positive, we have $n\geq 1$, thus, $P$ is non-constant. We claim that $P(f(x))$ has a vanishing derivative at $x_0$ from the right. We have
\begin{multline*}
\limsup_{y\searrow x_0}\frac{|P(f(y))-P(f(x_0))|}{|y-x_0|} = \limsup_{y\searrow x_0}\frac{|P(f(y))|}{|y-x_0|} = 
\limsup_{y\searrow x_0}\frac{|f(y)-f(x_0)|^n}{|y-x_0|}\\
\leq\underbrace{\left(\limsup_{y\searrow x_0}\frac{|f(y)-f(x_0)|}{|y-x_0|^{\alpha}}\right)^{1/\alpha}}_{<\infty}\underbrace{\left(\limsup_{y\searrow x_0}|f(y)-f(x_0)|^{n-\frac{1}{\alpha}}\right)}_{=0} = 0,
\end{multline*}
and our claim follows. We conclude that (2) implies (1).

\begin{figure}
\includegraphics[width=\textwidth, height = 5cm]{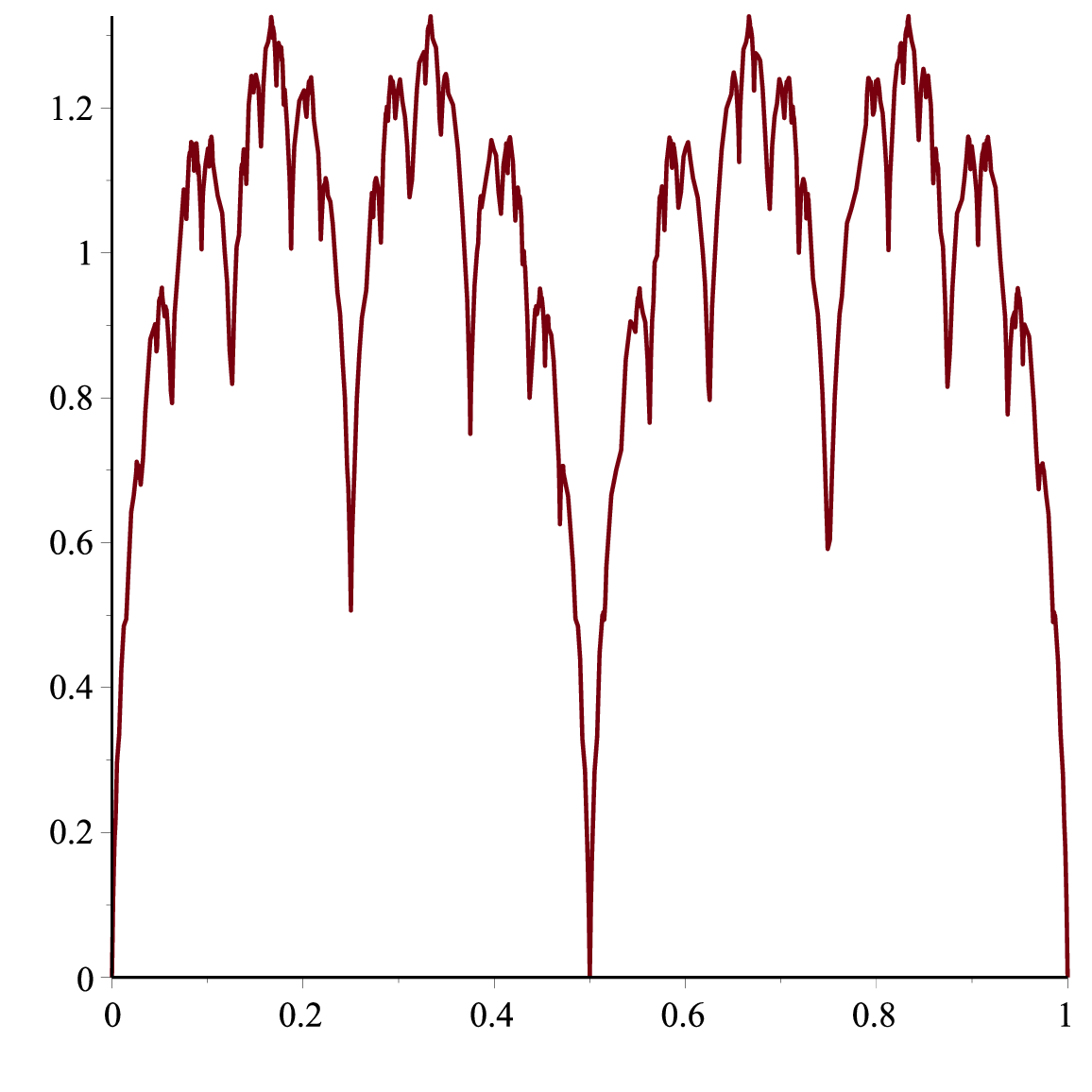}
\caption{Slightly increasing the coefficients in Takagi's function yields a function  $T_2(x)=\sum_{n\geq 0}\sum_{i=0}^{2^n-1} n2^{-n}\sigma_{n,i}(x)$ that is nowhere locally Lipschitz continuous.}
\end{figure}

It remains to show that (1) implies (3). Let $f$ be a completely non-Hölder function, $F$ a non-constant locally analytic function, $\alpha\in(0,1)$, and $x_0\in[0,1]$. As $F$ is non-constant, not all derivatives of $F$ vanish in $y_0=f(x_0)$, and we conclude that there is an integer $k$ and constants $c, \epsilon>0$, such that $|F(y)-F(y_0)|>c|y-y_0|^k$ for all $y$ with $|y-y_0|<\epsilon$. As $f$ is continuous, but not $\frac{\alpha}{k}$-Hölder continuous, we obtain 
\begin{multline*}
\limsup_{x\searrow x_0}\frac{|F(f(x))-F(f(x_0))|}{|x-x_0|^\alpha}\geq c\limsup_{x\searrow x_0}\frac{|f(x)-f(x_0)|^k}{|x-x_0|^\alpha}\\
 = c\left(\limsup_{x\searrow x_0}\frac{|f(x)-f(x_0)|}{|x-x_0|^{\alpha/k}}\right)^k = \infty,
\end{multline*}
that is, $F\circ f$ is not $\alpha$-Hölder continuous from the right in $x_0$. We conclude that $F\circ f$ is completely non-Hölder, and the proof is complete.

Note that the vector spaces of non-differentiable functions constructed by Fonf, Guraiy and Kadets \cite{FGK},Girgensohn \cite{Girgensohn}, and Bobok \cite{Bobok} are complete with respect to uniform convergence. This leads to the question whether the algebras obtained by Theorem~\ref{thm:characterization} are also complete. This is not the case, in fact, such an algebra does not exist.

\begin{Theo}
Let $\mathcal{A}\subseteq C^0([0,1])$ a closed algebra, that contains non-constant functions, and pick some $x_0\in(0,1)$. Then $\mathcal{A}$ contains a non-constant function $f$ such that $f$ is differentiable in $x_0$.
\end{Theo}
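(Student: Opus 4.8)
The plan is to exploit the completeness of $\mathcal{A}$ to compose a fixed non-constant generator with a highly \emph{flat} continuous function, producing an element that is differentiable at $x_0$ with vanishing derivative.

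First I would fix any non-constant $g\in\mathcal{A}$, set $y_0=g(x_0)$, and let $K=g([0,1])$, which is a non-degenerate compact interval because $[0,1]$ is connected and $g$ is non-constant. Since $\mathcal{A}$ is an algebra, every polynomial $P$ with $P(0)=0$ gives $P(g)\in\mathcal{A}$, and since $\mathcal{A}$ is closed under uniform convergence, $F\circ g\in\mathcal{A}$ for every $F\in C(K)$ that is a uniform limit on $K$ of such polynomials. By the Stone--Weierstrass theorem for algebras without unit, this accounts for all $F\in C(K)$ when $0\notin K$, and for all $F\in C(K)$ with $F(0)=0$ when $0\in K$. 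Thus I have at my disposal $F\circ g\in\mathcal{A}$ for a rich class of continuous $F$, and it remains to choose $F$ so that $F\circ g$ is non-constant and differentiable at $x_0$.

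Second, I would record the modulus of continuity of $g$ at $x_0$. Let $\omega$ be a continuous, strictly increasing majorant with $\omega(0)=0$ of $\delta\mapsto\sup_{|x-x_0|\le\delta}|g(x)-y_0|$, so that $|g(x)-y_0|\le\omega(|x-x_0|)$. The crucial point is that $\omega$ may tend to $0$ arbitrarily slowly, so no polynomial $F$ vanishing to finite order at $y_0$ can flatten $g$ at $x_0$; this is exactly where completeness of $\mathcal{A}$, allowing non-analytic, infinitely flat $F$, is indispensable. I would construct an increasing, continuous $M$ with $M(0)=0$ and $M(\omega(\delta))=o(\delta)$ as $\delta\searrow 0$, for instance $M(s)=\omega^{-1}(s)/\log(1/s)$, for which $M(\omega(\delta))/\delta=1/\log(1/\omega(\delta))\to 0$. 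Setting $F(y)=M(|y-y_0|)$, multiplied when $0\in K$ and $y_0\neq 0$ by a continuous cutoff equal to $1$ near $y_0$ and $0$ near $0$ to enforce $F(0)=0$, yields a continuous $F$ with $F(y_0)=0$ that is non-constant on the non-degenerate interval $K$.

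Finally, for $f:=F\circ g\in\mathcal{A}$ I would estimate, using that $M$ is increasing and $|g(x)-y_0|\le\omega(|x-x_0|)$,
\[
\left|\frac{f(x)-f(x_0)}{x-x_0}\right|=\frac{M(|g(x)-y_0|)}{|x-x_0|}\le\frac{M(\omega(|x-x_0|))}{|x-x_0|}\longrightarrow 0\quad(x\to x_0),
\]
so $f$ is differentiable at $x_0$ with $f'(x_0)=0$, while $f$ is non-constant because $F$ is non-constant on $K=g([0,1])$. The main obstacle I anticipate is the choice and verification of $M$: one must simultaneously guarantee that $M$ is a legitimate (increasing, continuous) modulus, that $M(\omega(\delta))=o(\delta)$ despite $\omega$ decaying as slowly as one likes, and that the resulting $F$ stays non-constant after the cutoff; the bookkeeping around the generalized inverse $\omega^{-1}$ and the two cases $0\in K$ versus $0\notin K$ is the part requiring care.
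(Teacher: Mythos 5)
Your proposal is correct and takes essentially the same route as the paper: the paper likewise uses closedness plus density of polynomials to compose a generator with a continuous function built from the inverse of its modulus of continuity $\omega(t)=\max\{|f(x)|:|x-x_0|\le t\}+t$ at $x_0$, the only cosmetic differences being that the paper first normalizes $f(x_0)=0$ (via $f^2-f(x_0)f$) and then squares $\omega^{-1}\circ f$ to upgrade the bound $|(\omega^{-1}\circ f)(x)|\le|x-x_0|$ to differentiability, whereas you build the $o(|x-x_0|)$ decay directly into $M$ with the logarithmic factor. If anything, your handling of the Stone--Weierstrass step (the unit versus no-unit cases) is more careful than the paper's.
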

\begin{proof}
If $\mathcal{A}$ contains a non-constant function, then $\mathcal{A}$ also contains a non-constant function that vanishes at $x_0$. In fact, pick any non-constant function in $\mathcal{A}$. Then $f(x)^2-f(x_0)f(x)$ is non-constant and vanishes at $x_0$. Pick a non-constant function $f$ with $f(x_0)=0$. Now let $F$ be any continuous function. Then we have $F\circ f\in\mathcal{A}$, as $P\circ f\in\mathcal{A}$ holds for all polynomials, and polynomials are dense in the space of all continuous functions $F:f([0,1])\rightarrow\mathbb{R}$. 

Define
\[
\omega(t)=\max\{|f(x)|:|x-x_0|\leq t\} + t.
\]
$\omega$ is obviously increasing, in particular, $\omega$ is injective.
 As $f$ is continuous, $\omega$ is continuous and satisfies $\omega(0)=0$. Then the continuous function $F=\omega^{-1}$ satisfies $|(F\circ f)(x)|\leq |x-x_0|$, in particular, $(F\circ f)^2$ is an element of $\mathcal{A}$ which is differentiable in $x_0$.
\end{proof}

\begin{figure}
\includegraphics[width=\textwidth, height = 5cm]{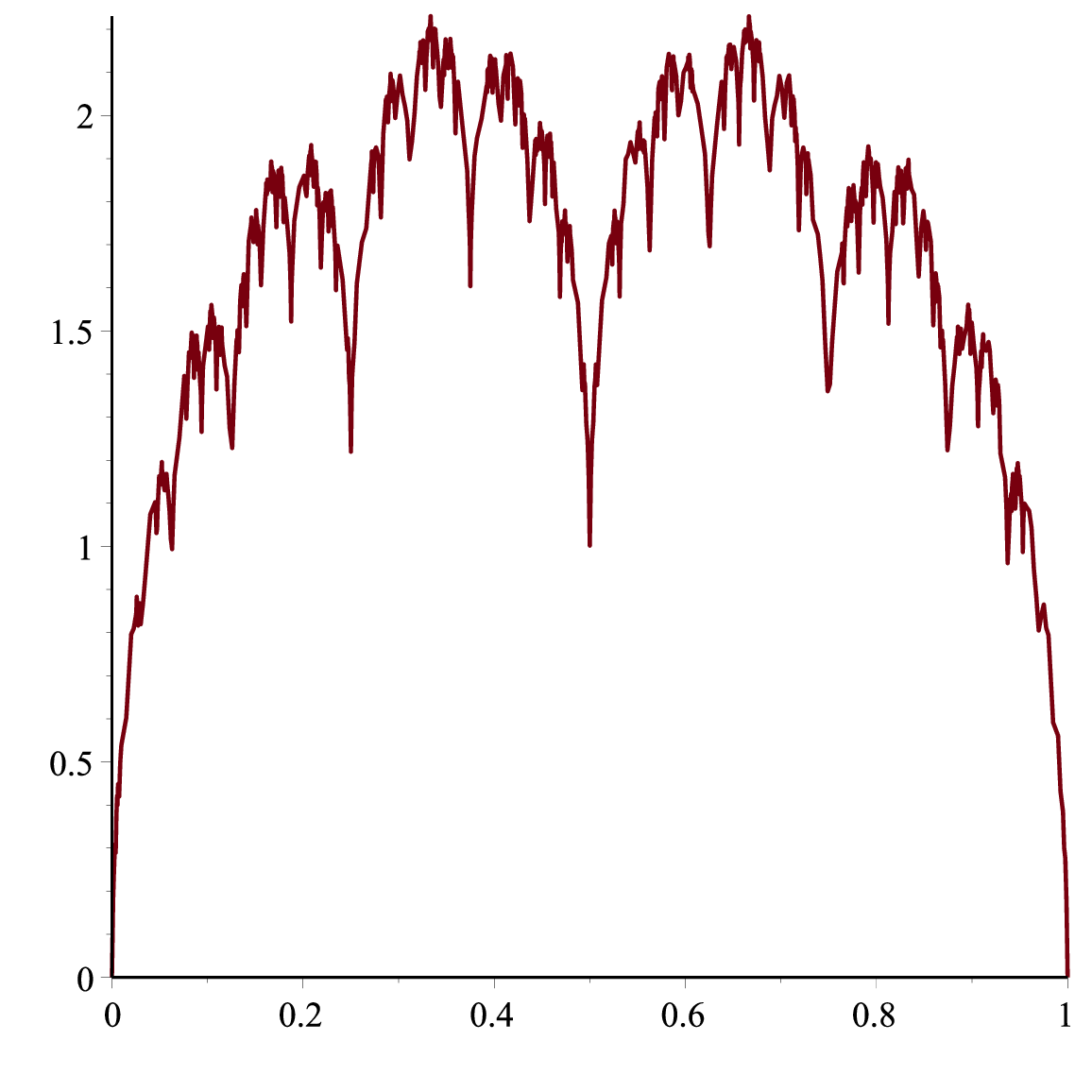}
\caption{The function $T(x)=\sum_{n\geq 0}\sum_{i=0}^{2^n-1} 2^{-n/2}\sigma_{n,i}(x)$ is $\alpha$-Hölder continuous for any $\alpha>\frac{1}{2}$ and shows similar regularity as a typical path of a Brownian motion..}
\end{figure}

\section{The set of completely non-Hölder functions}

We first prove Theorem~\ref{thm:meagre}. The proof follows the standard argument used to prove that the set of nowhere differentiable functions is meagre; therefore, we will be quite brief. Define the set
\begin{multline*}
A_{n} = \Big\{f\in C^0([0,1]): \\
\exists x_0\in[0,1], \epsilon\in\{\pm 1\}\forall y: 0<\epsilon(x_0-y)<\frac{1}{n}\Rightarrow |f(x_0)-f(y)|<|y-x_0|^{1/n}\Big\}.
\end{multline*}
The set of all functions that are not completely non-Hölder is contained in $\bigcup A_n$, hence, it suffices to show that $A_n$ is nowhere dense. Pick a function $f\in A_n$, and some $\epsilon>0$. Then $g(x)=f(x)+\epsilon \sin(100 (\epsilon/3)^{-n} x)$ has distance $\epsilon$ from $f$, and has the property that all functions $h$ which have distance $<\frac{\epsilon}{10}$ from $g$ are not contained in $A_n$. Hence $A_n$ is nowhere dense, $\bigcup A_n$ is meagre, and our claim follows.

To give an explicit example of a completely non-Hölder function we use the Faber-Schauder expansion of a continuous function. Every continuous function $f\in C([0,1])$ has a unique expansion in the form
\[
f(x) = a+bx + \sum_{n\geq 0}\sum_{i=0}^{2^n-1} \gamma_{n,i}\sigma_{n,i}(x),
\]
where the functions $\sigma_{n,i}$ form the Faber-Schauder basis introduced in the introduction.

The series on the right converges in $C^0$, if for each sequence $(i_n)$, where $i_0=0$ and $i_{n+1}\in\{2i_n, 2i_n+1\}$, the series $\sum |\gamma_{n, i_n}|$ converges. Now put $\delta_n=\min_{0\leq i\leq 2^n-1}|\gamma_{i,n}|$. Faber\cite{Faber} showed that if $\limsup\delta_n>0$, then $f$ is non-differentiable at a dense set of points. Girgensohn showed that under the same assumption, $f$ has at no point $x\in[0,1]$ a finite one-sided derivative. We now prove the following.

\begin{figure}
\includegraphics[width=\textwidth, height = 5cm]{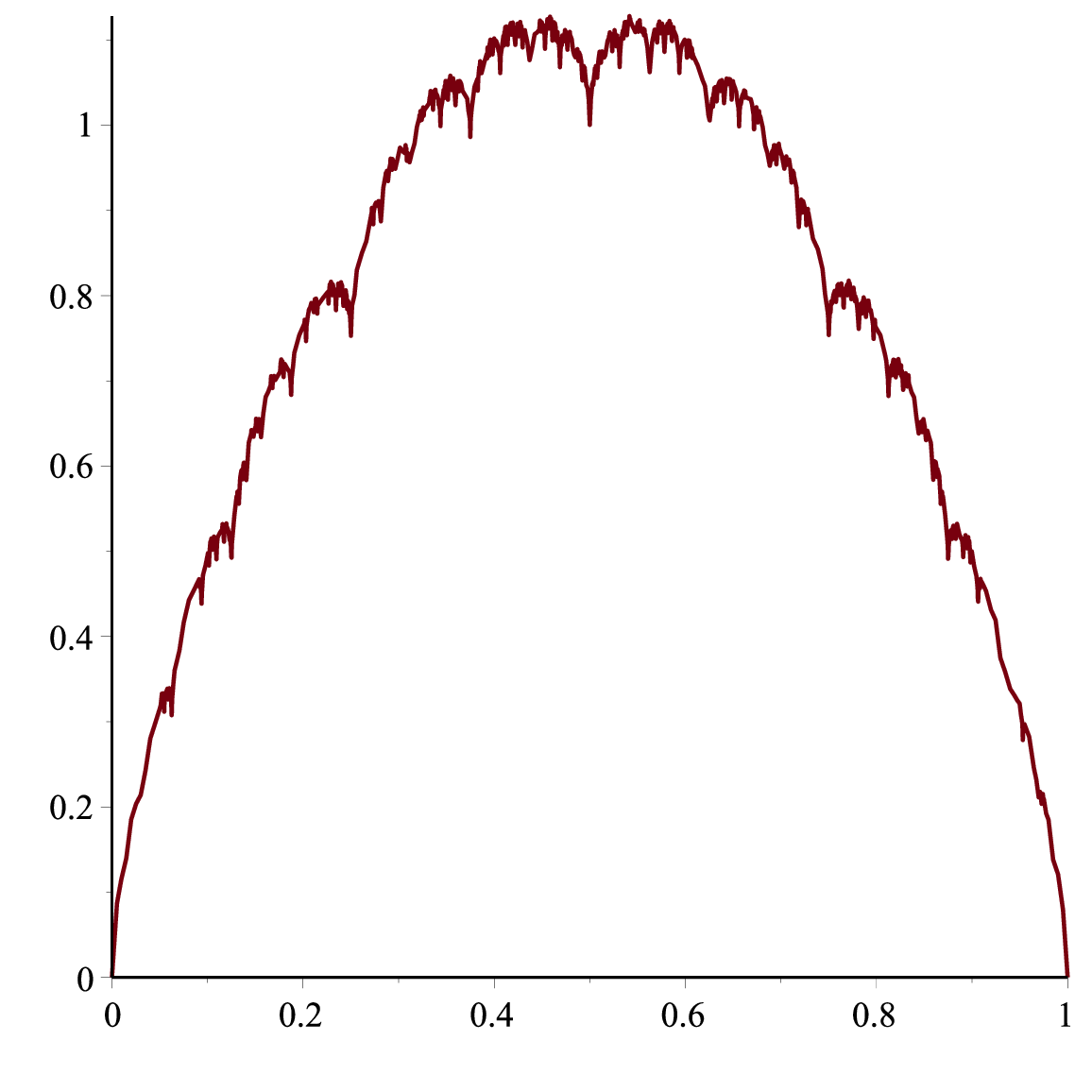}
\caption{The function $T(x)=\sum_{n\geq 0}\sum_{i=0}^{2^n-1} (1+n)^{-2}\sigma_{n,i}(x)$ is completely non-Hölder.}
\end{figure}

\begin{Theo}
\label{thm:Holder criterion}
Let $f\in C^0([0,1])$ be a function, and define
\[
\delta_n=\min_{0\leq i\leq 2^n-1}|\gamma_{i,n}|,\quad\Delta_n=\max_{0\leq i\leq 2^n-1}|\gamma_{i,n}|.
\]
Suppose that $\limsup 2^{\alpha n}\delta_n=\infty$. Then $f$ is at no point $x_0\in[0,1]$ one sided $\alpha$-Hölder continuous. If on the other hand
$\sum 2^{\alpha n}\Delta_n$
converges, then $f$ is everywhere $\alpha$-H\"older continuous.
\end{Theo}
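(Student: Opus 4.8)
The plan is to prove both halves from the geometric meaning of the Faber--Schauder coefficients. The engine is the observation that the partial sum $S_N f = a + bx + \sum_{n<N}\sum_i \gamma_{n,i}\sigma_{n,i}$ is exactly the piecewise-linear interpolant of $f$ at the dyadic points $k/2^N$ (all finer basis functions vanish there), so that for every level-$n$ interval $[i/2^n,(i+1)/2^n]$ with midpoint $m=(2i+1)/2^{n+1}$ one has $\gamma_{n,i}=f(m)-\tfrac12\big(f(i/2^n)+f((i+1)/2^n)\big)$. I would establish this identity first. It shows at once that $\gamma_{n,i}$ is the average of the two signed differences $f(m)-f(\text{endpoint})$, whence in every level-$n$ interval at least one of $|f(m)-f(i/2^n)|$, $|f(m)-f((i+1)/2^n)|$ is $\ge|\gamma_{n,i}|\ge\delta_n$; and that each block $g_n=\sum_i\gamma_{n,i}\sigma_{n,i}$ satisfies $\|g_n\|_\infty\le\Delta_n$ and has Lipschitz constant at most $2^{n+1}\Delta_n$.

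For the first assertion, fix $x_0$ and treat the right side (the left side is symmetric via the reflection $x\mapsto 1-x$, which preserves the basis and hence $\delta_n,\Delta_n$). For each $n$ set $j=\lceil 2^n x_0\rceil$, so that the level-$n$ interval $[j/2^n,(j+1)/2^n]$ lies in $[x_0,x_0+2^{1-n})$, that is, to the right of $x_0$ and within distance $2^{1-n}$. By the bump estimate two of its three marked points differ in $f$-value by at least $\delta_n$; since both lie in $[x_0,x_0+2^{1-n}]$, the triangle inequality yields a point $y$ with $x_0\le y\le x_0+2^{1-n}$ and $|f(y)-f(x_0)|\ge\delta_n/2$ (if $x_0$ is dyadic the left endpoint equals $x_0$ and the bound only improves). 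Hence $|f(y)-f(x_0)|/|y-x_0|^\alpha\ge 2^{-\alpha-1}\,2^{\alpha n}\delta_n$, and letting $n$ run through the subsequence along which $2^{\alpha n}\delta_n\to\infty$ makes this one-sided quotient unbounded, so $f$ fails to be $\alpha$-Hölder from the right at $x_0$ (this is the notion of one-sided Hölder failure used in the proof of Theorem~\ref{thm:characterization}).

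For the converse, assume $\sum 2^{\alpha n}\Delta_n<\infty$ and set $c_n=2^{\alpha n}\Delta_n$, so $(c_n)$ is bounded, say by $C_0$, with convergent sum $S=\sum_n c_n$. Given $x,y$ with $h=|x-y|\le 1$, choose $N$ with $2^{-N-1}<h\le 2^{-N}$ and split $f-a-bx=\sum_n g_n$ at level $N$. For $n\le N$ use $|g_n(x)-g_n(y)|\le 2^{n+1}\Delta_n h=2\cdot 2^{(1-\alpha)n}c_n h$; for $n>N$ use $|g_n(x)-g_n(y)|\le 2\Delta_n=2\cdot 2^{-\alpha n}c_n$. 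The first range sums to at most $2C_0 h\sum_{n\le N}2^{(1-\alpha)n}$, a constant multiple of $2^{(1-\alpha)N}h$ and hence of $h^\alpha$ (and a constant multiple of $h=h^\alpha$ when $\alpha=1$); the second range is at most $2\cdot 2^{-\alpha N}S$, again a constant multiple of $h^\alpha$. Adding the linear part's contribution $|b|h\le|b|h^\alpha$ gives $|f(x)-f(y)|\le C h^\alpha$.

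The step I expect to be the main obstacle is the bookkeeping in the first half: guaranteeing, for an arbitrary (possibly non-dyadic) $x_0$ and for each prescribed side, a level-$n$ dyadic interval lying \emph{entirely} on that side yet within distance $O(2^{-n})$ of $x_0$, so that its built-in bump can be transferred by the triangle inequality into a genuine lower bound for $|f(y)-f(x_0)|$ with $y$ on that side. Handling the dyadic-versus-non-dyadic cases, the endpoints $x_0\in\{0,1\}$ where only one side exists, and the harmless factors of $2$ is routine once the midpoint identity is in place; by contrast the second half is a standard two-regime dyadic summation.
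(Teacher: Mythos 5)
Your proposal is correct, and while its converse half is the same two-regime dyadic summation as in the paper, the first (and harder) half takes a genuinely different route. You share the paper's skeleton: fix a level $n$ on which \emph{every} coefficient satisfies $|\gamma_{n,i}|\ge C2^{-\alpha n}$, and work with dyadic points at scale $2^{-n}$ on the prescribed side of $x_0$. But the mechanism for extracting the coefficient differs. The paper forms the difference of the two difference quotients based at $x_0$ and at two level-$(n+1)$ dyadic points $x_1,x_2$ just to the right of $x_0$, and kills everything but the level-$n$ bump by decomposing $f=f_1+f_2+f_3$ with $f_1$ (coarse) linear near $x_0$ and $f_3$ (fine) vanishing at $x_1,x_2$; this forces it to assert the identity $f(x_i)=\ell(x_i)+\gamma_{n,a}\sigma_{n,a}(x_i)$ also for $i=0$, i.e.\ at $x_0$ itself, where $f_3$ has no reason to vanish unless $x_0$ is dyadic. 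Your version instead evaluates the exact interpolation identity
\[
\gamma_{n,j}=f\left(\tfrac{2j+1}{2^{n+1}}\right)-\tfrac{1}{2}\left(f\left(\tfrac{j}{2^n}\right)+f\left(\tfrac{j+1}{2^n}\right)\right)
\]
on a level-$n$ interval lying entirely to one side of $x_0$, so that the second difference involves only dyadic points, and then transfers the oscillation of size $\ge\delta_n$ to $x_0$ by the triangle inequality at the cost of a factor $\frac{1}{2}$. What this buys is a clean argument valid for arbitrary, non-dyadic $x_0$: the fine tail is never evaluated at $x_0$, which is precisely the detail the paper's write-up glosses over. The price (a constant $2^{-\alpha-1}$ and the need for a wholly one-sided interval) is covered by the hypothesis on $\delta_n$, i.e.\ that \emph{all} coefficients at level $n$ are large, which both arguments exploit in the same way. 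Your reflection trick for the left side is also legitimate, since $\sigma_{n,i}(1-x)=\sigma_{n,2^n-1-i}(x)$, so $\delta_n$ and $\Delta_n$ are unchanged.

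Two minor repairs. First, in the converse, for $\alpha=1$ your intermediate bound $2C_0h\sum_{n\le N}2^{(1-\alpha)n}=2C_0(N+1)h$ is not $O(h)$; keep $c_n$ inside the sum and use $\sum_{n\le N}2^{(1-\alpha)n}c_n\le S$, which works uniformly for all $\alpha\in(0,1]$ and is in effect what the paper does. Second, the step you flag as the main obstacle is already settled by your own choice $j=\lceil 2^nx_0\rceil$: the interval $[j/2^n,(j+1)/2^n]$ lies in $[x_0,x_0+2^{1-n})$, and one only needs $n$ large enough that $x_0+2^{1-n}\le 1$ (at $x_0=1$ only the left side exists), so no further bookkeeping is required.
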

\begin{proof}
We first show that a function with large Faber-Schauder coefficients cannot be H\"older.

Pick a constant $C$. As $\limsup 2^{\alpha n}\delta_n=\infty$, there exists an $n$, such that $\gamma_{n,a}\geq C 2^{-\alpha n}$ for all $a$. Pick such an index $n$.
Let $x_0\in[0,1]$ be arbitrary, and put $a=\lfloor 2^{n_k} x_0\rfloor+1$. We now consider the points $x_1=\frac{a}{2^{n_k+1}}$, $x_2 = \frac{a+1}{2^{n_k+1}}$. To compare $f$ at the points $x_0, x_1, x_2$, we decompose $f$ as
\[
f(x) =  \underbrace{\sum_{n<n_k}\sum_{i=0}^{2^n-1} \gamma_{n,i}\sigma_{n,i}(x)}_{=:f_1(x)} + \underbrace{\sum_{i=0}^{2^{n_k}-1} \gamma_{n,i}\sigma_{n_k,i}(x)}_{=: f_2(x)} + \underbrace{\sum_{n>n_k}\sum_{i=0}^{2^n-1} \gamma_{n,i}\sigma_{n,i}(x)}_{=: f_3(x)}.
\]
We have $f_3(x_1)=f_3(x_2)=0$. On $\left[\frac{a-1}{2^{n_k+1}}, \frac{a+1}{2^{n_k+1}}\right]$, the function $f_1$ is linear. We conclude that there is a linear function $\ell$, such that $f(x_i)=\ell(x_i)+\gamma_{n, a}\sigma_{n, a}(x_i)$ for $i=0, 1, 2$. Now
\begin{multline*}
\frac{f(x_2)-f(x_0)}{x_2-x_0} - \frac{f(x_1)-f(x_0)}{x_1-x_0} = \\
\frac{\gamma_{n, a}\sigma_{n, a}(x_2)-\gamma_{n, a}\sigma_{n, a}(x_0)}{x_2-x_0} - \frac{\gamma_{n, a}\sigma_{n, a}(x_1)-\gamma_{n, a}\sigma_{n, a}(x_0)}{x_1-x_0}  = \\
\gamma_{n,a}\left(2^{n+1} + \underbrace{\left(1-2^{n+1}(x_1-x_0)\right)}_{\geq 0}\frac{1}{x_2-x_0}\right)\geq 2^{n+1}\gamma_{n,a}>2C 2^{(1-\alpha)n}.
\end{multline*}
Hence,
\begin{multline*}
\max\big(|f(x_1)-f(x_0)|, |f(x_2)-f(x_0)|\big) \geq \\
2^{-n}\max\left(\frac{f(x_2)-f(x_0)}{x_2-x_0}, \frac{f(x_1)-f(x_0)}{x_1-x_0} \right)
>C2^{-\alpha n}>\frac{C}{2}\min(|x_2-x_0|, |x_1-x_0|)^\alpha,
\end{multline*}
and our claim follows.

For the converse direction put $\Delta_n= \max_i |\gamma_{n, i}|$. Suppose that $\sum_{n\geq 0}2^{\alpha n}\Delta_n$ converges. Then we have
\[
|f(x)-f(y)| = \left|\sum_{n\geq 0} \sum_{i=0}^{2^n-1}\gamma_{n,i} (\sigma_{n,i}(x)-\sigma_{n,i}(y))\right|.
\]
Let $n_0$ the smallest integer such that $2^{-n_0}<|x-y|$. Then we split the sum in the previous equation into terms $n\leq n_0$ and $n>n_0$. If $n\leq n_0$, we use the fact that the derivative of $\sigma_{n,i}$ is bounded by $2^n$, whereas for $n>n_0$ we use the fact that $\sigma_{n_i}$ is uniformly bounded by 1. Furthermore for each $x\in[0,1]$ and every $n$ there is exactly one $i$ such that $\sigma_{n,i}(x)\neq 0$, that is, the sum over $i$ collapses and we obtain
\begin{eqnarray*}
|f(x)-f(y)| & \leq & \sum_{n\leq n_0} 2^{n+1} \Delta_n |x-y| + \sum_{n>n_0} 2\Delta_n\\
 & = & 2|x-y| \sum_{n\leq n_0} 2^n\Delta_n + 2^{-\alpha n_0} \sum_{n>n_0} 2^{\alpha n}\Delta_n\\
 &  \leq & 2^{(1-\alpha) n_0+1}|x-y| \sum_{n\leq n_0} 2^{\alpha n}\Delta_n + 2^{-\alpha n_0} \sum_{n> n_0} 2^{\alpha n}\Delta_n\\
 & \leq & 4|x-y|^\alpha \sum_{n\geq 0} 2^{\alpha n}\Delta_n\\
  &\leq & C|x-y|^\alpha
\end{eqnarray*}
for some absolute constant $C$.
\end{proof}

\begin{Cor}
Put $f=\sum_{n\geq 0}\sum_{i=0}^{2^n-1}\frac{1}{(n+1)^2}\sigma_{n,i}$, and let $F$ be a function holomorphic in some neighbourhood of $[0, \frac{\pi^2}{6}]$. Then $F\circ f$ has no finite one-sided derivative at any point in $[0,1]$. 
\end{Cor}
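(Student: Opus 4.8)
The plan is to read the conclusion off the two theorems already proved: first verify that $f$ itself is completely non-Hölder via the Faber--Schauder criterion of Theorem~\ref{thm:Holder criterion}, then locate the range $f([0,1])$ inside $[0,\frac{\pi^2}{6}]$ so that $F$ qualifies as a locally analytic function on a neighbourhood of $f([0,1])$, and finally invoke the implication $(1)\Rightarrow(3)$ of Theorem~\ref{thm:characterization} together with the elementary observation that a completely non-Hölder function cannot have a finite one-sided derivative. Throughout I would assume $F$ to be non-constant; if $F$ were constant then $F\circ f$ would be constant and the assertion would fail, so this hypothesis is surely intended (and for $F$ holomorphic on a connected neighbourhood, ``non-constant'' is unambiguous by the identity theorem).

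First I would check that $f$ is well defined and completely non-Hölder. The coefficients are $\gamma_{n,i}=\frac{1}{(n+1)^2}$, so along every admissible branch $(i_n)$ the series $\sum_n |\gamma_{n,i_n}|=\sum_n \frac{1}{(n+1)^2}=\frac{\pi^2}{6}$ converges, whence $f\in C^0([0,1])$. Here $\delta_n=\Delta_n=\frac{1}{(n+1)^2}$. For an arbitrary fixed $\alpha>0$ we have $2^{\alpha n}\delta_n=\frac{2^{\alpha n}}{(n+1)^2}\to\infty$, so $\limsup_n 2^{\alpha n}\delta_n=\infty$; by Theorem~\ref{thm:Holder criterion} the function $f$ is at no point one-sided $\alpha$-Hölder continuous. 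Since $\alpha>0$ was arbitrary, $f$ is completely non-Hölder.

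Next I would bound the range. Since every $\gamma_{n,i}$ is nonnegative and every $\sigma_{n,i}$ takes values in $[0,1]$, we have $f\geq 0$; and because for each fixed $n$ and each $x$ exactly one $\sigma_{n,i}(x)$ is nonzero, $f(x)=\sum_n \frac{1}{(n+1)^2}\sigma_{n,i_n(x)}(x)\leq \sum_n \frac{1}{(n+1)^2}=\frac{\pi^2}{6}$. Hence $f([0,1])\subseteq[0,\frac{\pi^2}{6}]$. Consequently any $F$ holomorphic on a neighbourhood of $[0,\frac{\pi^2}{6}]$ is, in particular, non-constant and locally analytic on a neighbourhood of $f([0,1])$, so Theorem~\ref{thm:characterization} applies: by $(1)\Rightarrow(3)$ the composition $F\circ f$ is again completely non-Hölder.

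Finally I would deduce the stated conclusion. If $F\circ f$ possessed a finite one-sided derivative, say from the right, at some $x_0$, then $\limsup_{y\searrow x_0}\frac{|(F\circ f)(y)-(F\circ f)(x_0)|}{|y-x_0|}<\infty$, that is, $F\circ f$ would be $1$-Hölder from the right at $x_0$, contradicting complete non-Hölderness (take $\alpha=1$). The same argument applies from the left, so $F\circ f$ has no finite one-sided derivative at any point of $[0,1]$. I do not expect any of these steps to present a genuine difficulty; the result is essentially an assembly of the earlier theorems. The only point requiring care is the range computation of the third paragraph, since it is what guarantees that the analyticity hypothesis on $F$ is strong enough to feed into Theorem~\ref{thm:characterization}.
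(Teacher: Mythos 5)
Your proposal is correct and follows exactly the route the paper intends: the corollary is stated as an immediate consequence of Theorem~\ref{thm:Holder criterion} (which gives that $f$ is completely non-H\"older, since $2^{\alpha n}/(n+1)^2\to\infty$ for every $\alpha>0$) combined with the implication $(1)\Rightarrow(3)$ of Theorem~\ref{thm:characterization}, using the range bound $f([0,1])\subseteq[0,\frac{\pi^2}{6}]$. Your explicit handling of the constant-$F$ edge case (which the paper's statement tacitly omits) and of the reduction from ``no finite one-sided derivative'' to $1$-H\"older continuity are both sound and fill in precisely the details the paper leaves to the reader.
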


By varying the coefficients we can obtain functions of any prescribed regularity, depicted in the illustrations.

\begin{Cor}
For $f\in C^0([0,1])$ define $\delta_n$ and $\Delta_n$ as above. Put $\alpha_0 = \liminf\frac{-\log\Delta_n}{n\log 2}$, $\alpha_1=\liminf\frac{-\log\delta_n}{n\log 2}$. Then $f$ is everywhere $\alpha$-H\"older for all $\alpha<\alpha_0$, and nowhere one-sided $\alpha$-H\"older for $\alpha>\alpha_1$.
\end{Cor}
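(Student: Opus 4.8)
The plan is to derive both halves of the corollary directly from Theorem~\ref{thm:Holder criterion}, the only task being to convert the two $\liminf$ conditions into the quantitative hypotheses of that theorem. There is no new idea involved; the entire argument is an exercise in unwinding definitions, so I expect the only point requiring attention to be the different logical quantifiers that appear in the two cases.

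For the positive (Hölder) direction I would fix an arbitrary $\alpha < \alpha_0$ and pick $\epsilon > 0$ with $\alpha + \epsilon < \alpha_0$. Since $\alpha_0$ is defined as a $\liminf$, the inequality $\frac{-\log\Delta_n}{n\log 2} > \alpha + \epsilon$ holds for all sufficiently large $n$, which rearranges to $\Delta_n < 2^{-(\alpha+\epsilon)n}$ and hence $2^{\alpha n}\Delta_n < 2^{-\epsilon n}$. The tail of $\sum_n 2^{\alpha n}\Delta_n$ is therefore dominated by a convergent geometric series, so the series converges, and the second half of Theorem~\ref{thm:Holder criterion} gives that $f$ is everywhere $\alpha$-Hölder continuous. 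As $\alpha < \alpha_0$ was arbitrary, this settles the first claim.

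For the negative (non-Hölder) direction I would fix $\alpha > \alpha_1$ and pick $\epsilon > 0$ with $\alpha_1 + \epsilon < \alpha$. Here the $\liminf$ defining $\alpha_1$ only yields an infinite set of indices $n_k$ with $\frac{-\log\delta_{n_k}}{n_k\log 2} < \alpha_1 + \epsilon$, that is, $\delta_{n_k} > 2^{-(\alpha_1+\epsilon)n_k}$. Along this subsequence one has $2^{\alpha n_k}\delta_{n_k} > 2^{(\alpha-\alpha_1-\epsilon)n_k}\to\infty$, so $\limsup_n 2^{\alpha n}\delta_n = \infty$, and the first part of Theorem~\ref{thm:Holder criterion} shows that $f$ is one-sided $\alpha$-Hölder at no point.

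The single place that deserves care, and what I regard as the main (if modest) obstacle, is keeping track of which quantifier is available in each case: convergence of $\sum 2^{\alpha n}\Delta_n$ needs an upper bound on $\Delta_n$ valid for \emph{all} large $n$, which the $\liminf$ for $\alpha_0$ supplies, whereas divergence of the $\limsup$ only needs $\delta_n$ to be large along \emph{some} subsequence, which is exactly what a small value of the $\liminf$ for $\alpha_1$ produces. I would also dispose of the degenerate cases in a sentence: indices with $\Delta_n = 0$ only improve convergence, and indices with $\delta_n = 0$ can simply be omitted when extracting the subsequence, so neither affects the argument.
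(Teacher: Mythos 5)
Your proof is correct and takes exactly the route the paper intends: the paper states this corollary without proof as an immediate consequence of Theorem~\ref{thm:Holder criterion}, and your argument is precisely that deduction, with the $\liminf$ conditions correctly unwound (an eventual bound $\Delta_n < 2^{-(\alpha+\epsilon)n}$ giving convergence of $\sum 2^{\alpha n}\Delta_n$, and a subsequence bound $\delta_{n_k} > 2^{-(\alpha_1+\epsilon)n_k}$ giving $\limsup 2^{\alpha n}\delta_n = \infty$). Your attention to the quantifier asymmetry and the degenerate cases $\Delta_n = 0$, $\delta_n = 0$ is sound.
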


\section{Algebras with few points of differentiability}

If $f$ has no point of differentiability, but is $\alpha$-H\"older for some $\alpha>0$, then the algebra generated by $f$ contains non-constant functions which are differentiable at some point. However, such points are rare. In this section we consider two examples of this phenomenon.

We first study Takagi's function which is given by the Faber-Schauder expansion
\[
T=\sum_{n\geq 0}\sum_{i=0}^{2^n-1}2^{-n} \sigma_{i,n}.
\]
Anderson and Pitt\cite{Anderson} showed that 
\[
|T(x+h)-T(x)|\leq Ch\log h^{-1}.
\]
Krüppel \cite{K1} showed that for $x=\frac{k}{2^\ell}$, $0\leq k\leq 2^\ell-1$, we have
\[
\lim_{h\rightarrow 0}\frac{T(x+h)-T(x)}{|h|\log |h|^{-1}} = 1,
\]
that is, Anderson and Pitt's estimate is best possible.

In particular Takagi's function is not Lipschitz, but $\alpha$-Hölder for every $\alpha<1$. It follows from Theorem~\ref{thm:characterization} that the algebra generated by $T$ contains functions that have a derivative at certain points. However, the functions in this algebra have very few points of differentiability.

\begin{Theo}
Let $f$ be nowhere differentiable function, which is $\alpha$-Hölder for some $\alpha>\frac{1}{2}$, $F$ a twice continuously differentiable function.  Then $F\circ f$ is differentiable in $x_0$ if and only if $F'(f(x_0))=0$. The same holds true for one-sided derivatives.
\end{Theo}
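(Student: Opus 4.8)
The plan is to reduce the whole statement to a first-order Taylor expansion of $F$ at the point $y_0:=f(x_0)$, exploiting that the quadratic remainder is annihilated by a Hölder exponent strictly above $\tfrac12$. Since $F$ is twice continuously differentiable, on a neighbourhood of $y_0$ we may write
\[
F(y) - F(y_0) = F'(y_0)\,(y - y_0) + R(y), \qquad |R(y)| \le \tfrac{M}{2}\,(y - y_0)^2,
\]
where $M$ bounds $|F''|$ near $y_0$. Continuity of $f$ guarantees that $f(x)$ lies in this neighbourhood once $x$ is close enough to $x_0$, so the expansion is legitimate with $y=f(x)$.

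First I would substitute $y=f(x)$, subtract, and divide by $x-x_0$, obtaining
\[
\frac{F(f(x)) - F(f(x_0))}{x - x_0} = F'(y_0)\,\frac{f(x) - f(x_0)}{x - x_0} + \frac{R(f(x))}{x - x_0}.
\]
The decisive step is to show the remainder quotient tends to $0$. Using the $\alpha$-Hölder bound $|f(x) - f(x_0)| \le C|x - x_0|^\alpha$ gives
\[
\left|\frac{R(f(x))}{x - x_0}\right| \le \frac{M C^2}{2}\,|x - x_0|^{2\alpha - 1} \xrightarrow[x \to x_0]{} 0,
\]
precisely because $2\alpha - 1 > 0$. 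This is the only place the hypothesis $\alpha>\tfrac12$ enters, and it is exactly the threshold at which the argument works: at $\alpha=\tfrac12$ the exponent vanishes and the quadratic term is no longer negligible.

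With the remainder disposed of, both cases follow at once. Writing $Q(x)$ for the difference quotient of $f$ at $x_0$, the identity above reads $\frac{F(f(x))-F(f(x_0))}{x-x_0}=F'(y_0)Q(x)+o(1)$. If $F'(y_0)=0$ the right-hand side is $o(1)$, so $F\circ f$ is differentiable at $x_0$ with derivative $0$. If $F'(y_0)\ne 0$, then $Q(x)=\frac{1}{F'(y_0)}\bigl(\tfrac{F(f(x))-F(f(x_0))}{x-x_0}-o(1)\bigr)$, so a finite limit of the left-hand difference quotient would force a finite limit of $Q(x)$, i.e.\ differentiability of $f$ at $x_0$, contradicting that $f$ is nowhere differentiable. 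Hence $F\circ f$ is differentiable at $x_0$ if and only if $F'(f(x_0))=0$.

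Finally, the one-sided statement is obtained verbatim by replacing each two-sided limit $x\to x_0$ with $x\searrow x_0$ (or $x\nearrow x_0$): the remainder estimate is insensitive to the side, and the only change is that in the case $F'(y_0)\ne 0$ one invokes the nonexistence of a finite \emph{one-sided} derivative of $f$ (which holds for the intended examples, such as Takagi's function). I do not anticipate a genuine obstacle: the entire content sits in the remainder estimate, and the remaining argument is bookkeeping. The one point deserving care is ensuring that the neighbourhood of $y_0$ carrying the Taylor bound is actually entered for all $x$ near $x_0$, which is immediate from the continuity of $f$.
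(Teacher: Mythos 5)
Your proposal is correct and takes essentially the same route as the paper: a second-order Taylor expansion of $F$ at $y_0=f(x_0)$ whose quadratic remainder is annihilated by the H\"older bound since $2\alpha-1>0$, with the converse direction handled by transferring a finite limit of the difference quotient of $F\circ f$ back to $f$, contradicting nowhere differentiability. If anything, your single-identity formulation makes explicit what the paper merely asserts in the converse direction, and you correctly flag that the one-sided claim really needs $f$ to have no finite \emph{one-sided} derivative (a hypothesis slightly stronger than ``nowhere differentiable''), a subtlety the paper's statement glosses over but its subsequent corollary restores.
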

\begin{proof}
If $F'(y_0)=0$, then there exists a constant $C$ such that $|F(y)-F(y_0)|<C(y-y_0)^2$ for all $y$ sufficiently close to $y$. If $f$ is $\alpha$-Hölder, then $|F(f(x))-F(f(x_0))|<C'|x-x_0|^{2\alpha}$, which implies that $F\circ f$ has vanishing derivative at $x_0$.

Vice versa, if $F\circ f$ is differentiable in $x_0$, and $F$ has non-zero derivative at $f(x_0)$, then $f$ is differentiable at $x_0$ with derivative $\frac{(F\circ f)'(x_0)}{F'(f(x_0)}$.
\end{proof}

In particular we have the following.

\begin{Cor}
Let $f$ be a function, which is $\alpha$-H\"older for some $\alpha>\frac{1}{2}$, which does not have a one-sided derivative in any point. Let $F$ be a locally analytic function. Then $F\circ f$ has a one-sided derivative in some point $x_0$ if and only if the derivative in this point exists and equals 0.
\end{Cor}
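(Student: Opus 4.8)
The plan is to deduce the corollary directly from the preceding theorem, the only extra observations being that a locally analytic function is in particular twice continuously differentiable, and that the existence of a one-sided derivative is a strictly weaker hypothesis than two-sided differentiability. Throughout I would write $y_0 = f(x_0)$.

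First I would dispose of the trivial implication. If the ordinary derivative $(F\circ f)'(x_0)$ exists and equals $0$, then both one-sided derivatives exist and equal $0$ as well, so $F\circ f$ certainly has a one-sided derivative at $x_0$. This settles the ``if'' direction and shows that the content of the statement lies entirely in the converse.

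For the ``only if'' direction, suppose $F\circ f$ has a one-sided derivative at $x_0$; without loss of generality take it to be the right-hand derivative, the left-hand case being symmetric. Since $F$ is locally analytic, it is twice continuously differentiable in a neighbourhood of $y_0$, so the hypotheses of the preceding theorem are met: note that $f$, having no one-sided derivative anywhere, is in particular nowhere differentiable, while the Hölder exponent $\alpha>\frac12$ is assumed. By the one-sided part of that theorem, the existence of the right-hand derivative of $F\circ f$ at $x_0$ forces $F'(y_0) = 0$. Indeed, were $F'(y_0)\neq 0$, the argument in the theorem's proof would express the right-hand derivative of $f$ at $x_0$ as the quotient $(F\circ f)'_+(x_0)/F'(y_0)$, contradicting that $f$ has no one-sided derivative at $x_0$.

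It then remains to upgrade the equality $F'(y_0)=0$ to the conclusion that the full derivative of $F\circ f$ exists and vanishes, which is exactly the first half of the preceding theorem: when $F'(y_0)=0$ the Taylor estimate $|F(y)-F(y_0)|\le C(y-y_0)^2$, combined with the $\alpha$-Hölder bound on $f$, yields $|F(f(x))-F(f(x_0))|\le C'|x-x_0|^{2\alpha}$, and since $2\alpha>1$ this gives $(F\circ f)'(x_0)=0$. Chaining the two implications shows that a one-sided derivative of $F\circ f$ at $x_0$ exists precisely when the two-sided derivative exists and equals $0$. I do not anticipate a genuine obstacle here; the only points requiring a little care are to invoke the one-sided version of the theorem (rather than its two-sided statement), so that the weaker hypothesis on $F\circ f$ suffices, and to use the full strength of ``$f$ has no one-sided derivative'' rather than mere nowhere-differentiability.
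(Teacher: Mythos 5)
Your proof is correct and is exactly the argument the paper intends: the corollary is stated as an immediate consequence ("In particular") of the preceding theorem, and you have simply made explicit the two ingredients — that a locally analytic $F$ is twice continuously differentiable, and that the hypothesis "no one-sided derivative anywhere" is what makes the one-sided version of the theorem's converse direction go through. Your closing remark about needing the full strength of that hypothesis (rather than mere nowhere-differentiability) is precisely the point that justifies the corollary's stronger assumption on $f$.
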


Kahane \cite{Kahane} had shown that the level set $\{x:T(x)=\frac{2}{3}\}$ had Hausdorff dimension $\frac{1}{2}$, and
Amo, Bhouri, Carrillo, and Fern\'andez-S\'anchez \cite{Amo} showed that the level sets $\{x:T(x)=a\}$ have Hausdorff dimension $\leq\frac{1}{2}$. Hence, we obtain the following.

\begin{Cor}
Let $\mathcal{A}$ be the algebra of functions $F\circ T$, where $F$ is holomorphic in some neighbourhood of $[0, \frac{2}{3}]$, and $T$ is Takagi's function. Then a non-constant functions in $\mathcal{A}$ has a finite one-sided derivative in a set of Hausdorff dimension at most $\frac{1}{2}$, and there exist non-constant functions in this algebra which have a two sided derivative in a set of Hausdorff dimension $\frac{1}{2}$.
\end{Cor}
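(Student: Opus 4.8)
The plan is to reduce the set of points where a function $F\circ T$ is one-sided differentiable to a finite union of level sets of Takagi's function, and then to invoke the two quoted dimension estimates.

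First I would record the properties of $T$ that make the previous results applicable: $T$ is $\alpha$-Hölder for every $\alpha<1$, in particular for some $\alpha>\frac12$, it has no finite one-sided derivative at any point, and its range is $[0,\frac23]$ with maximum value $\frac23$. Hence the preceding Theorem and Corollary apply to $f=T$ and to any $F$ holomorphic in a neighbourhood of $[0,\frac23]$, such an $F$ being locally analytic and in particular twice continuously differentiable. Combining them yields the dictionary on which everything rests: $F\circ T$ admits a finite one-sided derivative at $x_0$ if and only if the two-sided derivative exists there and equals $0$, which in turn happens if and only if $F'(T(x_0))=0$. Thus both the set of one-sided and the set of two-sided differentiability of $F\circ T$ coincide with
\[
D_F=\{x_0\in[0,1]:F'(T(x_0))=0\}=\bigcup_{a\in Z}T^{-1}(a),\qquad Z=\{a\in[0,\tfrac23]:F'(a)=0\}.
\]

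Next I would argue that $Z$ is finite. If $F\circ T$ is non-constant, then $F$ is non-constant on $[0,\frac23]=T([0,1])$, so the holomorphic function $F'$ is not identically zero; its zeros are therefore isolated and, being confined to the compact interval $[0,\frac23]$, finite in number. Consequently $D_F$ is a finite union of level sets of $T$. By Amo, Bhouri, Carrillo and Fern\'andez-S\'anchez each such level set has Hausdorff dimension at most $\frac12$, and since Hausdorff dimension is stable under finite unions, $\dim_H D_F\le\frac12$. This establishes the first assertion.

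Finally, for the existence statement I would exhibit an explicit $F$. Taking $F(y)=(y-\frac23)^2$, a non-constant polynomial and hence holomorphic everywhere, we have $F'(y)=2(y-\frac23)$, so $Z=\{\frac23\}$ and $D_F=T^{-1}(\frac23)=\{x:T(x)=\frac23\}$. By the Theorem the function $F\circ T=(T-\frac23)^2$ has a genuine two-sided derivative (equal to $0$) at every point of this set, and it is non-constant since $T$ is. By Kahane's result the level set $\{x:T(x)=\frac23\}$ has Hausdorff dimension exactly $\frac12$, which furnishes the required non-constant element of $\mathcal{A}$. The only genuinely delicate point is the reduction in the first paragraph: one must be sure that the mere existence of a one-sided derivative of $F\circ T$ forces $F'(T(x_0))=0$, so that the differentiability set is governed by the zeros of $F'$ rather than by some subtler interaction between $F$ and the local behaviour of $T$. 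This is precisely what the preceding Corollary supplies, after which the finiteness of $Z$ and the two cited Hausdorff-dimension facts finish the proof.
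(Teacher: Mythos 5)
Your proposal is correct and is essentially the paper's own (implicit) argument: the paper derives this corollary directly from the preceding Theorem and Corollary together with the cited results of Kahane and of Amo, Bhouri, Carrillo and Fern\'andez-S\'anchez, exactly as you do, with the differentiability set reduced to the preimage under $T$ of the finitely many zeros of $F'$ in $[0,\frac{2}{3}]$. Your explicit choice $F(y)=(y-\frac{2}{3})^2$ and the finiteness argument for the zero set are precisely the details the paper leaves to the reader.
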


As a second example we consider a Brownian bridge. A Brownian bridge on $[0,1]$ is a standard Wiener process subject to $B_1=0$. It is easy to see that $B_t$ has the Faber-Schauder expansion $\sum_{n\geq 0} \sum_{0\leq i\leq 2^n-1} 2^{-{n/2}}\xi_{n,i}\sigma_{n,i}$, where the $\xi_{n,i}$ are independent variables following a normal distribution with variance 1. It follows immediately that the path of a Brownian bridge is almost surely $\alpha$-H\"older continuous for all $\alpha<\frac{1}{2}$.

The reverse estimate does not follow directly from Theorem~\ref{thm:Holder criterion}, as the minimum of the absolute value of $2^n$ independent standard normal variables is usually of magnitude $2^{-n}$, that is, $\delta_n\approx 2^{-3n/2}$. However, for each $x$ only one of the coefficients of the Faber-Schauder influences the value at $x$, that is, while there are almost vanishing coefficients, it is highly unlikely that these exceptional coefficients cluster together to produce some $x$ such that $B_t$ is differentiable at $x$. In fact, the following theorem follows from the proof of Theorem~\ref{thm:Holder criterion}.

\begin{Cor}
Identify the vertices of the infinite binary tree with $\{(n,i)| n\geq 0, 0\leq i\leq 2^n-1\}$, where $(n,i)$ is connected to $(n+1, 2i)$ and $(n+1, 2i+1)$. Let $f\in C^0([0,1])$ be a function with Faber-Schauder expansion $\sum_{n\geq 0}\sum_{0\leq i\leq 2^n-1} \gamma_{n,i}\sigma_{n,i}$. Suppose that for every branch$((n, i_n))_{n\geq 0}$ in the tree we have $\limsup 2^{\alpha n}|\gamma_{n, i_n}|=\infty$. Then $f$ is at no point one sided $\alpha$-H\"older.
\end{Cor}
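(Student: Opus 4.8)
The plan is to recognise this as nothing more than the proof of Theorem~\ref{thm:Holder criterion}, localised to a single branch. Inspecting that proof, one sees that for a fixed $x_0$ and a level $n$ the whole second--difference estimate rests on \emph{one} coefficient, namely the one indexing the dyadic interval of length $2^{-n}$ containing $x_0$; this is $\gamma_{n,i_n}$ with $i_n=\lfloor 2^n x_0\rfloor$. Since $\lfloor 2^{n+1}x_0\rfloor\in\{2\lfloor 2^n x_0\rfloor,\,2\lfloor 2^n x_0\rfloor+1\}$, the indices $(n,i_n)$ run exactly along the branch of the binary tree determined by $x_0$. The hypothesis $\limsup 2^{\alpha n}\delta_n=\infty$ in Theorem~\ref{thm:Holder criterion} was only a convenient uniform device forcing this on--branch coefficient to be large for every $x_0$ simultaneously; the present hypothesis supplies the same information branch by branch.

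First I would fix $x_0$, pass to its branch $(n,i_n)$, and use $\limsup 2^{\alpha n}|\gamma_{n,i_n}|=\infty$ to obtain, for every constant $C$, infinitely many levels $n$ with $|\gamma_{n,i_n}|\ge C2^{-\alpha n}$. For each such $n$ I would run the decomposition $f=f_1+f_2+f_3$ into the levels below, at, and above $n$ exactly as before: at the two level-$(n+1)$ dyadic points flanking the peak of $\sigma_{n,i_n}$ the tail $f_3$ vanishes, on $[\,i_n2^{-n},(i_n+1)2^{-n}\,]$ the head $f_1$ is affine, and the middle term reduces to the single bump $\gamma_{n,i_n}\sigma_{n,i_n}$. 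The discrete second difference is then $\ge 2^{n+1}|\gamma_{n,i_n}|>2C2^{(1-\alpha)n}$, so one of the two one--sided quotients is large and $\max\big(|f(x_1)-f(x_0)|,|f(x_2)-f(x_0)|\big)>\tfrac{C}{2}\min(|x_1-x_0|,|x_2-x_0|)^\alpha$. Letting $C\to\infty$ along these levels drives the Hölder quotient to infinity, so $f$ fails to be $\alpha$-Hölder at $x_0$ on the side carrying $x_1,x_2$.

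The step I expect to be the main obstacle is deciding \emph{which} side this controls, and upgrading to failure on both sides. The points $x_1,x_2$ sit on the side of $x_0$ towards the peak of $\sigma_{n,i_n}$, that is, to the right when the $(n{+}1)$st binary digit of $x_0$ is $0$ and to the left when it is $1$; on the opposite side the single bump is affine and gives no lower bound. Hence the branch through $x_0$ alone only refutes $\alpha$-Hölder continuity on the side selected by the digit $d_{n+1}(x_0)$ at the good levels, and it is conceivable that all good levels share one digit. The natural remedy is to track not the single kink but the accumulated slope: to leading order $f(y)-f(x_0)$ behaves like $(y-x_0)\sum_{m<M}(-1)^{d_{m+1}(x_0)}2^m\gamma_{m,i_m}$ for $y$ separating from $x_0$ at level $M$, and choosing $M$ just beyond a level at which $2^{\alpha n}|\gamma_{n,i_n}|$ sets a new record makes the top term dominate the sum, so that the estimate survives while the side is now dictated by the later digits $d_{M+1}(x_0)$. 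Because a non-dyadic $x_0$ carries both digits infinitely often and the record heights tend to infinity (so the window of usable levels $M$ past each record grows), one should be able to steer $M$ onto either side.

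Making this precise is the delicate part, and it is exactly here that the hypothesis is needed for \emph{every} branch rather than only the one through $x_0$: one must rule out that long runs of a single digit cluster around the record levels, and, failing that, feed in a neighbouring branch whose bumps abut $x_0$ on the deficient side — the subtlety being that the hypothesis only guarantees large coefficients \emph{somewhere} along such a branch, not at the vertex adjacent to $x_0$, so a careful accounting of which branch contributes a kink within $2^{-n}$ of $x_0$ is required. The dyadic points $x_0=k2^{-\ell}$ I would handle separately, since there the two branches meeting at $x_0$ furnish one-sidedness on each side directly. All quantitative estimates are verbatim those of Theorem~\ref{thm:Holder criterion}; only this sidedness bookkeeping is new.
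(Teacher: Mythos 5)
Your diagnosis in the last two paragraphs is exactly right, and it is also exactly where your proposal stops being a proof: everything from ``the natural remedy'' onward is a plan, not an argument, and the plan cannot be carried out, because the statement as written is false (for $\alpha<1$, which is the regime the paper uses). The obstacle you name is fatal: at a level $n$ where $|\gamma_{n,i_n}|\ge C2^{-\alpha n}$, the second difference controls only the side of $x_0$ selected by the digit $d_{n+1}(x_0)$, and for the other side one needs a large coefficient at a vertex lying within $O(2^{-m})$ of $x_0$ on that side \emph{at comparable scale} $2^{-m}$; the limsup hypothesis, applied to the branches passing near $x_0$ on that side, says nothing about the depth at which their large coefficients occur. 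A counterexample realizing this: fix $\alpha\in(0,1)$, let $x_0$ have binary digits $1$ except on a very sparse set (so $x_0$ is not dyadic); on the branch of $x_0$ put $\gamma_{m_k,i_{m_k}}=k\,2^{-\alpha m_k}$ only at levels $m_k$ followed by a run of $1$-digits up to position $M_k$ with $M_k-m_k\ge (k+1+\log_2 k)/(1-\alpha)$, and $0$ at all other levels; for every vertex $(m,b)$ strictly to the right of the branch set $\gamma_{m,b}=(2^{-m}D)^{\alpha/2}$ where $D=b2^{-m}-x_0$, provided $D\ge 2^{-m}$, and $\gamma_{m,b}=0$ otherwise; fill the subtrees to the left of the branch with, say, $\gamma_{m,b}=2^{-\alpha m}\log(m+2)$. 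Every branch then satisfies the hypothesis: the branch of $x_0$ has heights $2^{\alpha m_k}|\gamma_{m_k,i_{m_k}}|=k$, the branch of any $y>x_0$ has heights $\approx 2^{\alpha m/2}(y-x_0)^{\alpha/2}\to\infty$, and branches left of $x_0$ have heights $\log(m+2)$. Yet $f$ is $\alpha$-H\"older from the right at $x_0$: the level-$m_k$ bump meets $(x_0,x_0+h]$ on a stretch of length at most $2^{-M_k}$, so it contributes at most $k\,2^{(1-\alpha)m_k+1}\min(h,2^{-M_k})\le 2^{-k}h^\alpha$, while the level-$m$ vertex containing $x_0+h$ (if off the branch) has $D\le h$, hence coefficient at most $(2^{-m}h)^{\alpha/2}$, and these sum over $m\ge\log_2(1/h)$ to $O(h^\alpha)$. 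So no amount of ``sidedness bookkeeping'' from the limsup hypothesis alone can succeed.

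For comparison: the paper gives no proof of this corollary at all, only the sentence that it follows from the proof of Theorem~\ref{thm:Holder criterion}, and that assertion founders on precisely your obstacle --- the hypothesis there concerns $\delta_n=\min_i|\gamma_{n,i}|$, so that proof never has to record \emph{which} level-$n$ vertex supplies the kink on the chosen side of $x_0$, whereas the branch version must, and that vertex lies on the branch of $x_0$ only for one of the two sides. What is true, and what the Brownian-bridge application actually provides (the Borel--Cantelli argument gives a non-exceptional vertex in every window of $k$ consecutive levels along every branch, with heights growing geometrically), is the corollary under such a bounded-window hypothesis: for the right side one applies it to the branch descending always-left from the first dyadic point of level $n+2$ to the right of $x_0$, whose first $k$ vertices all lie in $(x_0,x_0+2^{-n}]$ at scale at least $2^{-(n+k+2)}$, and symmetrically for the left side. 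Two further corrections to your write-up: the quantitative estimates you import ``verbatim'' are themselves flawed, since the second difference based at $x_0,x_1,x_2$ is not $\gamma_{n,i_n}$ times that of $\sigma_{n,i_n}$ --- the tail satisfies $f_3(x_1)=f_3(x_2)=0$ but $f_3(x_0)\neq 0$, and the stray term $f_3(x_0)\bigl(\frac{1}{x_1-x_0}-\frac{1}{x_2-x_0}\bigr)$ can dominate $2^{n+1}|\gamma_{n,i_n}|$ when $x_1-x_0$ is small; the robust substitute is to take the three points dyadic (endpoints and midpoint of a dyadic interval on one side of $x_0$), for which $f(y_1)-\frac{1}{2}\bigl(f(y_0)+f(y_2)\bigr)$ equals a single coefficient exactly, at the cost of making the needed vertex visibly off-branch. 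Finally, note that the \emph{two-sided} statement (no point of two-sided $\alpha$-H\"older continuity) does follow from the branch hypothesis by this identity applied to $I_n(x_0)$ itself; it is only the one-sided claim that fails.
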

The reader might wonder why we present this generalization only here and not as part of Theorem~\ref{thm:Holder criterion}. The reason is that outside a probabilistic context we do not see any natural examples where the large coefficients for different branches differ.

We can now prove the following.

\begin{Theo}
A Brownian bridge has almost surely no point where it is $\alpha$-H\"older from at least one side, provided that $\alpha>\frac{1}{2}$.
\end{Theo}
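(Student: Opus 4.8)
The plan is to reduce the statement to the branch criterion established in the preceding Corollary and then to verify its hypothesis by a first moment (counting) argument on the binary tree. Write the Faber--Schauder coefficients of the bridge as $\gamma_{n,i}=2^{-n/2}\xi_{n,i}$, with the $\xi_{n,i}$ independent standard normal, and set $\beta=\alpha-\tfrac12>0$. Since $2^{\alpha n}|\gamma_{n,i}|=2^{\beta n}|\xi_{n,i}|$, the branch criterion tells us that it suffices to prove that, almost surely, $\limsup_n 2^{\beta n}|\xi_{n,i_n}|=\infty$ along \emph{every} branch $(i_n)_{n\ge 0}$ of the tree. Equivalently, I must show that the ``bad'' event
\[
E=\{\exists\ \text{a branch }(i_n)\text{ and }M<\infty:\ |\xi_{n,i_n}|\le M\,2^{-\beta n}\ \text{for all }n\}
\]
has probability zero.

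The difficulty is that $E$ quantifies over the uncountable family of branches, so a naive union bound over branches is hopeless; this is the main obstacle, and I would resolve it with a finite-depth truncation. Fix $M\in\N$ and call a node $(k,i)$ \emph{$M$-small} if $|\xi_{k,i}|\le M\,2^{-\beta k}$. Since the normal density is bounded, $\mathbb{P}[(k,i)\text{ is }M\text{-small}]\le c\,M\,2^{-\beta k}$ for an absolute constant $c$. Let $N_n$ count the root-to-level-$n$ paths all of whose nodes are $M$-small. There are $2^n$ such paths, and the $n+1$ coefficients along any fixed path are independent, so by linearity of expectation
\[
\mathbb{E}[N_n]=2^n\prod_{k=0}^{n}\mathbb{P}[(k,i_k)\text{ is }M\text{-small}]\le 2^n\prod_{k=0}^{n}\min\!\left(1,\,c\,M\,2^{-\beta k}\right)\le C_M\,K_M^{\,n}\,2^{-\beta n(n+1)/2},
\]
for constants $C_M,K_M$ depending only on $M$. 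The key point is that the superexponential factor $2^{-\beta n(n+1)/2}$, produced by the telescoping product of the per-level probabilities, dominates the exponential growth of the number of paths, so $\mathbb{E}[N_n]\to 0$.

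Finally, an infinite $M$-small branch restricts to an $M$-small root-to-level-$n$ path for every $n$, hence $\{\exists\ \text{infinite }M\text{-small branch}\}\subseteq\{N_n\ge 1\}$, and Markov's inequality gives
\[
\mathbb{P}[\exists\ \text{infinite }M\text{-small branch}]\le\inf_n\mathbb{P}[N_n\ge 1]\le\inf_n\mathbb{E}[N_n]=0.
\]
Taking the union over $M\in\N$ yields $\mathbb{P}[E]=0$, which is precisely the hypothesis of the branch criterion; therefore almost surely the bridge is at no point one-sided $\alpha$-Hölder. (Intersecting the resulting null sets over a sequence $\alpha_m\downarrow\tfrac12$ and using that one-sided $\alpha$-Hölder continuity strengthens with $\alpha$ then gives the conclusion simultaneously for all $\alpha>\tfrac12$.) No compactness or König-type argument is needed: the containment in $\{N_n\ge 1\}$ already converts the uncountable ``for all branches'' condition into a countable family of first moment estimates, and the quadratic-in-$n$ decay of $\mathbb{E}[N_n]$ is what makes this succeed.
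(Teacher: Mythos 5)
Your proof is correct, and it follows the paper's overall strategy --- reduce to the branch criterion of the preceding Corollary, then kill the bad event by a union bound over tree paths using independence of the coefficients --- but the counting step is organized genuinely differently. The paper fixes $\epsilon\in(0,\alpha-\tfrac12)$, calls a level-$n$ vertex exceptional when $|\xi_{n,i}|<2^{-\epsilon n}$, and applies Borel--Cantelli to the events ``some path from level $n$ to level $n+k$ is entirely exceptional'' for a fixed window length $k>\tfrac1\epsilon$; since each such event has probability at most $2^{n+2k-(k+1)\epsilon n}$, which is summable in $n$, almost surely every branch meets a non-exceptional vertex in every sufficiently deep window, hence infinitely often. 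You instead recast the failure of $\limsup_n 2^{\beta n}|\xi_{n,i_n}|=\infty$ as boundedness, i.e.\ the existence of an infinite $M$-small branch, bound the expected number $\mathbb{E}[N_n]$ of $M$-small root-to-level-$n$ paths by the telescoping product $\le C_M K_M^n 2^{-\beta n(n+1)/2}$, and conclude by Markov's inequality and a union over $M\in\N$. The trade-off: the paper's windowed argument is local in depth (it needs only exponential smallness per window, and it yields the quantitative extra that gaps between large coefficients along any branch are eventually at most $k$), while yours dispenses with Borel--Cantelli and the auxiliary window parameter altogether, replacing them with a single superexponentially decaying first moment; the price is that you must quantify over the cutoff $M$, which your countable union handles correctly. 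Both verify exactly the hypothesis of the branch Corollary, so both are complete; yours is arguably the more elementary and self-contained of the two.
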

\begin{proof}
Fix $\epsilon>0$. 
Consider an infinite binary tree, and attach to each vertex a random variable, such that all random variables are independent following a normal distribution with variance 1. Call a vertex on the $n$-th level of the tree exceptional, if the absolute value of the associated random variable is less than $2^{-\epsilon n}$. If we can show that almost surely every branch contains infinitely many non-exceptional vertices, our claim follows.

Pick an integer $k>\frac{1}{\epsilon}$. Then the probabiliy that there exists a path connecting a vertex on the $n$-th layer with a vertex on the $n+k$-th layer passing only through exceptional vertices is at most $2^{n+2k-(k+1)\epsilon n}$, as there are $2^{n+k}$ such paths, the probability that a vertex is exceptional is $<2^{-\epsilon n+1}$, and the events "$v$ is exceptional" and "$w$ is exceptional" are independent. As $\sum_{n\geq 0} 2^{n-(k+1)\epsilon n}$ converges, we find by the Borel-Cantelli Lemma that almost surely there are only finitely many $n$, such that such a path exists. In particular we have almost surely that for all but finitely many $n$ we have that every branch contains a non-exceptional vertex on one of the levels $n, n+1, \ldots, n+k$. In particular, almost surely every path contains infinitely many non-exceptional vertices.
\end{proof}
Note that the same result could be proven directly using the independence and distribution of increments of a Brownian motion, however, we choose to give this proof using the Faber-Schauder expansion, as the latter can be applied to other settings, such as Brownian bridges with obstacles or solutions of stochastic differential equations.

If $B_t$ is a Brownian bridge, then the almost sure Hausdorff dimension of the set $\{x:B_t(x)=0\}$ is easily determined to be $\frac{1}{2}$. It is much more difficult to deal with all level sets simultaneously, as there are uncountably many level sets. Nevertheless Perkins\cite{Perkins} showed that we almost surely have that all level sets of a Brownian motion are either empty, consist of a single point, or have Hausdorff dimension $\frac{1}{2}$, and the proof carries over to Brownian bridges. We therefore obtain the following.

\begin{Theo}
Let $B_t(x)$ be a path of a Brownian bridge. Then we have almost surely that for every holomorphic function $F$ the set of points $x$ such that the function $F\circ B_t(x)$ has a one-sided derivative in $x$ is either empty, consists of a single point, or has Hausdorff dimension $\leq\frac{1}{2}$.
\end{Theo}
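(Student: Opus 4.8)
The plan is to combine three ingredients that are already available: the regularity result for Brownian bridges proved above, the composition principle from Theorem~\ref{thm:characterization}, and Perkins' level-set dimension theorem as quoted in the text. First I would fix a path $B_t$ of the Brownian bridge lying in the almost sure event on which (a) $B_t$ is $\alpha$-H\"older for every $\alpha<\frac12$, (b) $B_t$ has no point where it is one-sided $\alpha$-H\"older for any $\alpha>\frac12$, and (c) every level set of $B_t$ is empty, a single point, or of Hausdorff dimension $\frac12$. Events (a) and (b) hold almost surely by the two theorems immediately preceding this statement, and (c) holds almost surely by Perkins' theorem; a finite intersection of almost sure events is almost sure, so I may work on a single good path.

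On such a path $B_t$ is in particular completely non-H\"older, since one-sided $\alpha$-H\"older continuity fails at every point for every $\alpha>\frac12$ by (b), and failing for the larger exponent forces failure for every positive exponent. Hence Theorem~\ref{thm:characterization} applies: for any non-constant holomorphic $F$, the composite $F\circ B_t$ is again completely non-H\"older, so by the equivalence with statement~(2) it can have a one-sided derivative only where that derivative vanishes. The key step is therefore to locate the points where $(F\circ B_t)'=0$ from a side. Writing $y_0=B_t(x_0)$, the chain-rule heuristic says a vanishing one-sided derivative of $F\circ B_t$ can occur either where $F'(y_0)=0$ or where $B_t$ itself has a vanishing one-sided derivative at $x_0$; but (b) rules out the latter entirely. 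So every point of one-sided differentiability of $F\circ B_t$ must satisfy $F'(B_t(x_0))=0$, i.e.\ $x_0$ lies in a level set $\{x:B_t(x)=c\}$ with $c$ a zero of $F'$.

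Since a non-constant holomorphic $F$ has a discrete, hence at most countable, zero set for $F'$, the set of candidate points is a countable union of level sets of $B_t$. By (c) each such level set is empty, a singleton, or of Hausdorff dimension $\frac12$, and a countable union of sets of dimension at most $\frac12$ again has dimension at most $\frac12$ (Hausdorff dimension is countably stable). This gives the trichotomy in the statement, and the argument is uniform over all holomorphic $F$ because (c) controls all level sets of the single fixed path simultaneously.

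The main obstacle is making the chain-rule step rigorous for one-sided derivatives rather than invoking it heuristically. The clean route is to avoid differentiation altogether and argue through H\"older exponents as in the proof of Theorem~\ref{thm:characterization}: at a point $x_0$ with $F'(y_0)\neq 0$ one has $|F(y)-F(y_0)|\geq c\,|y-y_0|$ near $y_0$, so a putative one-sided bound $|F(B_t(x))-F(y_0)|\leq K|x-x_0|^\beta$ would transfer to a one-sided $\beta$-H\"older bound on $B_t$ at $x_0$, contradicting (b) for any $\beta>\frac12$; and taking the one-sided derivative to exist is the special case $\beta=1$. Thus I would phrase the whole argument in terms of the completely non-H\"older property and the level-set structure, which lets me quote the earlier results verbatim and sidesteps the delicacy of one-sided chain rules for merely continuous $B_t$.
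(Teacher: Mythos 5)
Your operative argument --- the one in your third and fourth paragraphs --- is correct and is exactly how the paper obtains this theorem (the paper writes no separate proof; it combines the preceding almost-sure statement about one-sided $\alpha$-H\"older points of the bridge with Perkins' level-set theorem, precisely as you do). However, your second paragraph contains a genuine error that must be excised rather than left as scaffolding: you claim $B_t$ is completely non-H\"older because failure of one-sided $\alpha$-H\"older continuity ``for the larger exponent forces failure for every positive exponent.'' The monotonicity runs the other way: for $|y-x_0|\leq 1$ one has $|y-x_0|^{\alpha}\leq|y-x_0|^{\beta}$ whenever $\alpha\geq\beta$, so H\"older continuity at a point with a \emph{larger} exponent is the \emph{stronger} property, and failure for $\alpha>\frac{1}{2}$ says nothing about $\alpha<\frac{1}{2}$. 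Indeed your own property (a) asserts that $B_t$ is $\alpha$-H\"older everywhere for every $\alpha<\frac{1}{2}$, so the path is certainly not completely non-H\"older, and Theorem~\ref{thm:characterization} cannot be invoked. Note also that if its conclusion did hold, $F\circ B_t$ would be completely non-H\"older and would therefore have no one-sided derivative at any point at all, making the set in the statement always empty --- a conclusion visibly stronger than what is being proved and incompatible with this section's theme, namely that such compositions may well be differentiable on level sets.

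Fortunately the detour is unnecessary, and your final paragraph already contains the correct replacement: if $F'(B_t(x_0))\neq 0$, then $|F(y)-F(B_t(x_0))|\geq c\,|y-B_t(x_0)|$ for $y$ near $B_t(x_0)$, so a one-sided derivative of $F\circ B_t$ at $x_0$ (which yields a one-sided Lipschitz bound there) transfers to a one-sided $1$-H\"older bound on $B_t$ at $x_0$, contradicting (b) since $1>\frac{1}{2}$. Hence every point of one-sided differentiability lies in $\bigcup_{c:F'(c)=0}\{x:B_t(x)=c\}$; for $F$ non-constant on the component of its domain containing the range (the constant case must be excluded, as it is implicitly in the paper), $F'$ has only finitely many zeros in the compact interval $B_t([0,1])$, and Perkins' theorem together with countable stability of Hausdorff dimension gives the bound $\frac{1}{2}$. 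Your observation that the almost-sure event is fixed once and for all, independently of $F$, is the essential point and you handle it correctly. In short: strike the appeal to complete non-H\"olderness and to Theorem~\ref{thm:characterization}; what remains is a correct proof and coincides with the paper's intended one.
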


To obtain algebras consisting of functions with less points of differentiability we have to reduce the size of the level sets. We now construct a large sets of such functions.
\begin{Lem}
\label{Lem:Dimension}
Let $(n_k)$ be a strictly increasing sequence of non-negative integers such that $\frac{n_{k+1}}{n_k}\rightarrow\infty$. Then the level sets of the function
\[
f=\sum_{k\geq 1}\sum_{i=1}^{2^{n_k}-1} 2^{-n_k}\sigma_{n_k, i}
\]
 have Hausdorff dimension $0$.
\end{Lem}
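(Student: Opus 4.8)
The plan is to estimate, for a fixed value $a$, the number $N_k(a)$ of dyadic intervals of length $2^{-n_k}$ on which $f$ attains the value $a$. These intervals cover the level set $\{x:f(x)=a\}$ and have diameter $2^{-n_k}$, so $\mathcal{H}^s(\{f=a\})\le\liminf_k N_k(a)\,2^{-sn_k}$ for every $s>0$; hence it suffices to exhibit a subsequence of indices $k$ along which $\frac{\log_2 N_k(a)}{n_k}\to 0$. To do this I would first analyse the local structure of $f$. On a dyadic interval $I$ of length $2^{-n_k}$ write $f|_I=p_I+r_I$, where $p_I$ collects the levels $n_1,\dots,n_k$ and $r_I$ the levels above $n_k$. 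Since on $I$ every tent of a level $n_m$ with $m<k$ is affine and the level-$n_k$ tent is a single triangle, $p_I$ is piecewise linear with at most two pieces, while $\|r_I\|_\infty\le\sum_{m>k}2^{-n_m}\le 2\cdot2^{-n_{k+1}}$. The slope of $p_I$ coming from the levels below $n_k$ is a sum $2\sum_{m<k}\varepsilon_m$ with $\varepsilon_m\in\{\pm1\}$, so its half has the same parity as $k-1$; adding the slopes $\pm2$ of the level-$n_k$ triangle, one of the two pieces of $p_I$ can be flat (slope $0$) only when $k$ is even. This parity fact is the structural core of the argument.

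Next I would set up a dichotomy governing how a counted interval $I$ refines to level $n_{k+1}$. If the piece of $p_I$ carrying the value $a$ has nonzero slope, then that slope is at least $2$ in modulus, so the preimage of an interval of length $O(2^{-n_{k+1}})$ about $a$ has length $O(2^{-n_{k+1}})$ and meets only $O(1)$ intervals of length $2^{-n_{k+1}}$; such a ``steep'' interval has only $O(1)$ children meeting $\{f=a\}$. If instead a piece of $p_I$ is flat with value within $O(2^{-n_{k+1}})$ of $a$, then up to all $2^{n_{k+1}-n_k}$ of its sub-intervals meet $\{f=a\}$, each carrying its own level-$n_{k+1}$ triangle of height $2^{-n_{k+1}}$ that sweeps through $a$. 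Writing $B_k$ for the number of these ``flat'' intervals, this gives the recursion $N_{k+1}(a)\le 6\,N_k(a)+2^{n_{k+1}-n_k}B_k$, and by the parity fact $B_k=0$ whenever $k$ is odd.

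The decisive estimate is $B_k\le C\,N_{k-1}(a)$ for an absolute constant $C$. Here I would use that the flat value of a flat child of a given level-$n_{k-1}$ interval equals, up to $O(2^{-n_k})$, the value of the parent's piecewise linear part at that child, and that along a parent piece these values are spaced by at least $2\cdot2^{-n_k}$ — the parent pieces being themselves non-flat because $k-1$ is odd. As a flat child must have value within $O(2^{-n_{k+1}})$ of $a$ and $2^{-n_{k+1}}\ll 2^{-n_k}$, each parent contributes only $O(1)$ flat children, giving $B_k\le C\,N_{k-1}(a)$. Feeding this into the recursion and using $B_k=0$ for odd $k$, a routine induction bounds $N_k(a)$ at even $k$ by $2^{\,O(n_{k-1})}$, so that $\frac{\log_2 N_k(a)}{n_k}=O\!\left(\frac{n_{k-1}}{n_k}\right)\to 0$ along the even indices, by the hypothesis $n_{k+1}/n_k\to\infty$. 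This proves $\dim_H\{f=a\}=0$ for every $a$.

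The step I expect to be the main obstacle is exactly the estimate $B_k\le C\,N_{k-1}(a)$: it is the assertion that the blow-up factor $2^{n_{k+1}-n_k}$ cannot be incurred at two successive (necessarily even) levels along the same branch. Although every level-$n_k$ interval lying over a flat piece is again flat-capable, only boundedly many of them per parent lie close enough in value to $a$ to blow up, and quantifying this ``spreading of the offsets from $a$'' — in tandem with the bookkeeping of the slope parities — is where the rapid growth $n_{k+1}/n_k\to\infty$ and the full mechanism must be made precise.
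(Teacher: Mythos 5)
Your proposal is correct and is essentially the paper's own proof: the same splitting of $f$ into the partial sum up to level $n_k$ plus a tail of size $O(2^{-n_{k+1}})$, the same steep/flat dichotomy with the slope-parity observation forcing flat pieces to occur only at even $k$, the same recursion $N_{k+1}(a)\le O(N_k(a))+2^{n_{k+1}-n_k}B_k$ with $B_k=O(N_{k-1}(a))$, and the same conclusion along a subsequence of scales via the liminf bound on Hausdorff (lower box) dimension. The step you single out as the main obstacle, $B_k\le C\,N_{k-1}(a)$, is exactly the paper's observation that along a non-flat parent piece the candidate flat values are spaced by at least $2^{1-n_k}$ while only values within $O(2^{-n_{k+1}})$ of $a$ can meet the level set, so it goes through as you describe; in fact your parity fact alone ($B_k=0$ for odd $k$) combined with the trivial bound $N_{k-1}(a)\le 2^{n_{k-1}}$ already closes the induction at even indices.
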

\begin{proof}
Pick a real number $\lambda$.
Let $f_K$ be the partial sum
\[
f_K=\sum_{k=1}^K\sum_{i=1}^{2^{n_k}-1} 2^{-n_k}\sigma_{n_k, i}.
\]
Then $\|f-f_K\|_{\infty}\leq 2^{-n_{k+1}+1}$. The derivative of $f_K$ exists for all but finitely many points, and for all $x$ with finitely many exceptions we have $|f_K'(x)-f_{K-1}'(x)|=1$. 

Now let $x\in[\frac{a}{2^{n_K}}, \frac{a+1}{2^{n_K}}]$ be a real number such that $f(x)=\lambda$. If $f_K'(x)\neq 0$, then $|f_K(x)-\lambda|\leq 2^{-n_{K+1}+1}$, therefore $x$ is contained in a subinterval of length $\frac{2^{-n_{K+1}+1}}{|f'(x)|}\leq 2^{-n_{K+1}+1}$. If $f_K'(x)=0$, then $f_{K-1}'(x)=\pm 1$, hence, if we define the integer $b$ by means of the relation $x\in[\frac{b}{2^{n_{K-1}}}, \frac{b+1}{2^{n_{K-1}}}]$, the integer $a$ is uniquely defined by the relations
\[
a\in\left[\frac{b}{2^{n_{K-1}}}, \frac{b+1}{2^{n_{K-1}}}\right], \quad f_K'\left(\frac{a+0.5}{2^{n_K}}\right)=0,
\]
and there is a solution of the equation $f(x)=\lambda$ with $\frac{a}{2^{n_K}}\leq x\leq\frac{a+1}{2^{n_K}}$.

We conclude that if $N_K$ is the number of intervals of the form $[\frac{a}{2^{n_K}}, \frac{a+1}{2^{n_K}}]$, which contain a solution of the equation $f(x)=\lambda$, then for any $\epsilon>0$ we get
\[
N_{K+1} \leq 3N_K + 2^{n_K-n_{K+1}} N_{K-1}\leq 2^{n_K+2} = \left(2^{n_{K+1}}\right)^{\frac{(1+\frac{2}{n_K})}{n_{K+1}/n_K}}<\left(2^{n_{K+1}}\right)^\epsilon,
\]
provided that $K$ is sufficiently large.

\end{proof}
\begin{Cor}
There exists a $2^{\aleph_0}$-dimensional vector space of functions $f\in C^0([0,1])$, which are $\alpha$-H\"older for all $\alpha<1$, have no one-sided derivative in any point, and have level sets of Hausdorff dimension $0$.
\end{Cor}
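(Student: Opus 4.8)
The plan is to construct the desired vector space as an uncountable family of functions of the type analysed in Lemma~\ref{Lem:Dimension}, parametrised so that they remain linearly independent while every non-trivial linear combination retains all three properties simultaneously. The natural move is to fix one rapidly growing sequence $(n_k)$ with $\frac{n_{k+1}}{n_k}\to\infty$ and to encode a continuum of choices by allowing the coefficient on each dyadic level to be scaled by a parameter. Concretely, for $\lambda>0$ consider
\[
f_\lambda=\sum_{k\geq 1}\sum_{i=1}^{2^{n_k}-1} \lambda^k 2^{-n_k}\sigma_{n_k, i},
\]
or some comparable encoding in which different parameters force linear independence; one then lets the $f_\lambda$ span the vector space $V$. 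The series still converges in $C^0$ for each fixed $\lambda$ since the telescoping argument from Lemma~\ref{Lem:Dimension} only used that consecutive levels are separated by the gaps $n_k$, and these are unchanged.

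The three properties then need to be verified for an arbitrary non-zero $g=\sum_{j} c_j f_{\lambda_j}\in V$. First I would check $\alpha$-Hölder continuity for all $\alpha<1$: this is a direct application of the convergence half of Theorem~\ref{thm:Holder criterion}, since the Faber-Schauder coefficients of $g$ are supported on the sparse levels $n_k$ and are $O(2^{-n_k})$ up to a constant depending on $g$, so $\sum 2^{\alpha n}\Delta_n$ converges for every $\alpha<1$. Second, for the absence of one-sided derivatives I would apply the non-Hölder half of Theorem~\ref{thm:Holder criterion} (or its tree-refinement, the Corollary preceding this statement): on each active level $n_k$ the coefficient of $g$ is a fixed non-zero linear combination of the $\lambda_j^k$, so $\delta_{n_k}$ is comparable to $2^{-n_k}$ and hence $\limsup 2^{\alpha n}\delta_n=\infty$ for every $\alpha<1$, which yields the failure of one-sided $\alpha$-Hölder continuity, and letting $\alpha\to 1$ rules out a finite one-sided derivative. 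Third, the level sets of $g$ have Hausdorff dimension $0$ by re-running the counting argument of Lemma~\ref{Lem:Dimension} verbatim, since that argument depended only on the ratios $\frac{n_{k+1}}{n_k}\to\infty$ and on the dichotomy $f_K'(x)\in\{0,\pm 1\}$ away from finitely many points; the finite linear combination merely rescales the non-zero slopes by a bounded non-zero constant, so the recursion $N_{K+1}\leq 3N_K+2^{n_K-n_{K+1}}N_{K-1}$ persists.

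The main obstacle is the last point: after taking a linear combination $g$ of several $f_{\lambda_j}$, the clean statement ``$f_K'(x)=0$ or $f_{K-1}'(x)=\pm 1$'' is no longer literally available, because the slopes on level $n_k$ are now scaled by the combination-dependent constant $d_k=\sum_j c_j\lambda_j^k$, which varies with $k$. I would handle this by noting that, since the $\lambda_j$ are distinct and the $c_j$ not all zero, a Vandermonde argument shows $d_k\neq 0$ for all large $k$, so the slopes on each active level lie in a finite set $\{0,\pm d_k\}$; the key inequality $|f_K(x)-\lambda|\leq \|g-g_K\|_\infty$ used to trap $x$ in a short interval then reads $x$ lies in an interval of length $\|g-g_K\|_\infty/|d_K|$, and one must verify that the tail $\|g-g_K\|_\infty$ (which is $O(|d_{K+1}|2^{-n_{K+1}})$) still dominates $|d_K|$ in the required way. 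Because $d_k$ grows only geometrically in $k$ while $2^{-n_k}$ decays superexponentially in the gaps, these bounded multiplicative factors are absorbed into the $\epsilon$ in the final estimate $N_{K+1}<(2^{n_{K+1}})^\epsilon$, and the dimension-zero conclusion survives. The remaining bookkeeping — confirming linear independence of the $f_\lambda$, which follows from the distinctness of the parameters via the same Vandermonde structure, and checking that $V$ has dimension $2^{\aleph_0}$ since the parameter set is uncountable — is routine.
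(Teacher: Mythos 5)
Your route is genuinely different from the paper's --- you fix one sparse level sequence $(n_k)$ and generate the continuum by scaling amplitudes geometrically ($\lambda^k$), whereas the paper keeps every non-zero coefficient equal to $2^{-m}$ at level $m$ and instead varies \emph{which} levels occur with $\lambda$ --- but this difference is exactly where the proposal breaks. For a non-zero combination $g=\sum_j c_j f_{\lambda_j}$ the level-$n_k$ coefficient is $d_k2^{-n_k}$ with $d_k=\sum_j c_j\lambda_j^k$, so $\delta_{n_k}=|d_k|2^{-n_k}$ and $\delta_n=0$ off the sparse levels. Since $|d_k|\le\bigl(\sum_j|c_j|\bigr)\bigl(\max_j\lambda_j\bigr)^k$ grows at most geometrically in $k$, while $2^{(1-\alpha)n_k}$ grows superexponentially (because $n_{k+1}/n_k\to\infty$), we get $2^{\alpha n_k}\delta_{n_k}=|d_k|2^{-(1-\alpha)n_k}\to 0$ for every $\alpha<1$. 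Hence $\limsup 2^{\alpha n}\delta_n=0$, not $\infty$: the hypothesis you invoke in your second step is false --- and it \emph{must} be false, since you had just shown $g$ to be $\alpha$-H\"older, and the two halves of Theorem~\ref{thm:Holder criterion} are mutually exclusive. ``Letting $\alpha\to 1$'' does not help; what is needed is the hypothesis at $\alpha=1$ itself, namely $\limsup_k|d_k|=\infty$, or at least $\limsup_k|d_k|>0$ so that Girgensohn's criterion applies, and on your parameter range $\lambda>0$ this simply fails.

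Moreover this is a failure of the construction, not merely of the write-up. For $0<\lambda<1$ put $\psi_k(x)=2^{-n_k}\sum_i\sigma_{n_k,i}(x)$; this is piecewise linear with slopes $\pm 2$, hence $2$-Lipschitz, and differentiable at every non-dyadic $x_0$. Since $\sum_k\lambda^k<\infty$, dominated convergence applied to $\bigl(f_\lambda(x_0+h)-f_\lambda(x_0)\bigr)/h=\sum_k\lambda^k\bigl(\psi_k(x_0+h)-\psi_k(x_0)\bigr)/h$ shows that $f_\lambda$ has a (two-sided!) derivative at every non-dyadic point, so your space contains non-zero elements differentiable on a set of full measure. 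Restricting to $\lambda>1$ would repair the derivative step ($|d_k|\to\infty$, so Theorem~\ref{thm:Holder criterion} with $\alpha=1$ excludes points of one-sided Lipschitz behaviour), but the level-set step still needs real work: the slopes of the partial sums are not in $\{0,\pm d_k\}$ as you claim; they are the signed sums $\sum_{k\le K}\pm 2d_k$, and the counting in Lemma~\ref{Lem:Dimension} needs the non-zero slopes at stage $K$ to be bounded below by a fixed multiple of $|d_{K+1}|$, which requires a dominance hypothesis (for instance all $\lambda_j>2$, so that $|d_K|>\sum_{k<K}|d_k|$) that you would have to state and prove. The paper's construction --- uniform amplitudes, supports contained in $\{2^{2^n}\}$ and varying with $\lambda$ --- is engineered precisely so that the level multipliers of any combination lie in a fixed finite set of subset-sums of the $c_j$, bounded away from $0$ and $\infty$; that is what lets Girgensohn's criterion and the slope dichotomy of Lemma~\ref{Lem:Dimension} go through essentially unchanged.
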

\begin{proof}
Define
\[
f_\lambda=\sum_{n=1}^\infty \sum_{i=0}^{2^{2^{2^{\lfloor n^\lambda\rfloor}}}-1} 2^{-2^{\lfloor n^\lambda\rfloor}}\sigma_{2^{2^{\lfloor n^\lambda\rfloor}}, i}
\]
It is clear that $\{f_\lambda:\lambda>1\}$ is linearly independent, and as the indices of non-vanishing coefficients are a subset of $\{2^{2^n}:n\in\N\}$, we have that every linear combination of these functions satisfies the assumptions of Lemma~\ref{Lem:Dimension}. Furthermore by Girgensohn's criterion mentioned above we have that all elements of $\langle f_\lambda:\lambda>1\rangle$ have no finite one-sided derivative anywhere.
\end{proof}

Jan-Christoph Schlage-Puchta\\
Mathematisches Institut\\
Ulmenstraße 69, Haus 3\\
18057 Rostock\\
Germany\\
\verb+jan-christoph.schlage-puchta@uni-rostock.de+
\end{document}